\def\draft{\fbox{\tiny draft: \today}}
\let\draft=\relax
\title{Tropical diagrams of probability spaces}
\author[RM]{R. Matveev}
\author[JWP]{J. W. Portegies}
\begin{document}
\thispagestyle{fancy} 

\begin{abstract}
  After endowing the space of diagrams of probability spaces with an
  entropy distance, we study its large-scale geometry by identifying
  the asymptotic cone as a closed convex cone in a Banach space. We
  call this cone the \emph{tropical cone}, and its elements
  \emph{tropical diagrams of probability spaces}. Given that the
  tropical cone has a rich structure, while tropical diagrams are
  rather flexible objects, we expect the theory of tropical diagrams
  to be useful for information optimization problems in information
  theory and artificial intelligence. In a companion article, we give
  a first application to derive a statement about the entropic cone.
\end{abstract}
\maketitle

\son
\jon

\section{Introduction}
With \cite{Matveev-Asymptotic-2018} we started a research program
aiming for a systematic approach to a class of information
optimization problems in information theory and artificial
intelligence.  A prototypical example of such a problem, still wide
open, is the characterization of the entropic cone, the closure of all
vectors in $\Rbb^{2^N-1}$, which are entropically
representable. Other information optimization problems arise for
instance in causal inference \cite{Steudel-Information-2015},
artificial intelligence \cite{Dijk-Informational-2013}, information
decomposition \cite{Bertschinger-Quantifying-2014},
robotics \cite{Ay-Predictive-2008},
neuroscience \cite{Friston-Free-2009} and in variational
autoencoders \cite{Kingma-Auto-2013}.

The global strategy of our program is roughly based on the following
way of thinking.  The entropic cone is clearly a very complicated
object: it is known that it is not
polyhedral \cite{Matus-Infinitely-2007}.  Yet, perhaps, much of its
complexity may be explained by it being a projection of another,
simpler, higher-dimensional object.

The purpose of this article is to construct such a higher-dimensional
(infinite-dimensional, in fact) object, which we call
the \term{tropical cone}, and to derive some of its basic properties.
In \cite{Matveev-Tropical-Entropic-2019} we apply the theory to derive a
statement about the entropic cone.

Before outlining the construction of the tropical cone, let us mention
that for our purposes, the language of random variables proved
inconvenient, which is why work with \term{diagrams of probability
spaces} instead.

Diagrams of probability spaces are commutative diagrams in the
category of probability spaces, with (equivalence classes of)
measure-preserving maps as morphisms, such as
\[
\tageq{diagram-examples}
\begin{cd}[column sep=small]
{}
\&
Z
\arrow{dl}
\arrow{dr}
\&
{}\\
X
\&
{}
\&
Y
\end{cd}
\quad\quad
\begin{cd}[row sep=3mm,column sep=small]
{}
\&
Z
\arrow{dl}
\arrow{dr}
\&
{}\\
X
\arrow{dr}
\&
{}
\&
Y
\arrow{dl}\\
{}
\&
U
\&
{}
\end{cd}
\quad\quad
\begin{cd}[row sep=3mm,column sep=small]
  \mbox{}
  \&
  T
  \arrow{dl}
  \arrow{d}
  \arrow{dr}
  \&
  \mbox{}
  \\
  U
  \arrow{d}
  \&
  V
  \arrow{dl}
  \arrow{dr}
  \&
  W
  \arrow{d}
  \\
  X
  \&
  Y
  \arrow[from=ul,crossing over]{}
  \arrow[from=ur,crossing over]{}
  \&
  Z
  \\
\end{cd}
\]
Collections of $n$ random variables give rise to a special type of
diagrams, that include, besides the target spaces of the random
variables themselves, the target space of every joint
variable. Such diagrams have a particular combinatorial type. The
first and the last diagrams in (\ref{eq:diagram-examples}) are
examples of such special types of diagrams in case of two and three
random variables respectively. The description of other diagrams using the language of random variables is much less transparent.

We construct the tropical cone as the \term{asymptotic cone} in the
space of diagrams of probability spaces endowed with the
\term{intrinsic entropy distance} \cite{Kovavcevic-Hardness-2012,
  Vidyasagar-Metric-2012, Matveev-Asymptotic-2018}. The asymptotic
cone captures large-scale geometry of a metric space.  As a
particularly neat application, A'Campo gave an elegant construction of
the real numbers as an asymptotic cone in a metric space of sequences
of integers \cite{ACampo-Natural-2003}.  We will
call elements in the tropical cone \term{tropical diagrams of
  probability spaces}.

The reason for the name \emph{tropical cone} is the following. For
instance in algebraic geometry, tropical varieties are, roughly
speaking, divergent sequences of classical varieties, renormalized on
a log scale with an increasing base. The adjective
`tropical' carries little semantics, but was introduced in honor of
the Brazilian mathematician and computer scientist Imre Simon
who worked on the subject of tropical mathematics. 
Analogously, we construct the asymptotic
cone from certain divergent sequences with respect to the intrinsic
entropy distance. As the intrinsic entropy distance is entropy-based,
we achieve a similar type of renormalization as in algebraic geometry.

The tropical cone has a rich algebraic structure. Indeed, we show that
it is a closed, convex cone in a Banach space.  In particular, one can
take convex combinations of tropical diagrams. Other useful operations
and constructions can be carried through for tropical diagrams,
whereas they do not have an equivalent in the classical context of
probability spaces, see~\cite{Matveev-Conditioning-2019}. All in
all, from some perspective, tropical diagrams are easier to deal with
than diagrams or probability spaces, since only rough, asymptotic
relations between probability spaces are preserved under
tropicalization, similar to how all complicated features of the
landscape disappear when looking at the Earth from outer space.

The structure of the present article is as follows. In Section
\ref{se:asymptotic-cone-abstract}, we first give a general construction
of an asymptotic cone in an abstract setting.  We believe that this
abstract setting will make the construction more transparent and
easier to follow. The results we present in that section are
probably quite standard, but we find it beneficial to
  gather them ``under one roof.'' 
In Section \ref{se:grothendieck} we show how, under certain conditions, the asymptotic cone can be interpreted as a closed convex cone in a Banach space. 
We specify to
the case of diagrams of probability spaces in Section
\ref{se:tropical-diagrams}, reformulate the Asymptotic Equipartition Property proved in \cite{Matveev-Asymptotic-2018} in terms of tropical diagrams in Section \ref{se:aep}. We conclude with a simple characterization of the tropical cone for special types of diagrams in Section \ref{se:chains}.

\section{Asymptotic Cones of Metric Abelian Monoids}
\label{se:asymptotic-cone-abstract}
In this section we define the asymptotic cone in the setting of an
abstract metric Abelian monoid. In a later section, we will specify to
the case of diagrams of probability spaces.

\subsection{Metric and pseudo-metric spaces}
A pseudo-metric space $(X,\dist)$ is a set $X$ equipped with a
pseudo-distance $\dist$, a bivariate function satisfying all the
axioms of a distance function, except that it is allowed to vanish on
pairs of non-identical points. An isometry of such spaces is a
distance-preserving map, such that for any point in the target space
there is a point in the image at zero distance away from it.  Given
such an pseudo-metric space $(X,\dist)$ one could always construct an
isometric metric space $(X/_{\dist=0}\,,\dist)$, the metric quotient,
by identifying all pairs of points that are distance zero apart.

Any property formulated in terms of the pseudo-metric holds
simultaneously for a pseudo-metric space and its metric quotient.  It
will be convenient for us to construct pseudo-metrics on spaces
instead of passing to the quotient spaces.

For a pair of points $x,y\in X$ in a pseudo-metric space $(X,\dist)$
we will write $x\aeq[\dist] y$ if $\dist(x,y)=0$. We call such a pair of
points ($\dist$-)metrically equivalent. 

Many metric-topological notions such as (Lipschitz-)continuity,
compactness, $\epsilon$-nets, dense subsets, etc., extend to the setting of a
pseudo-metric spaces and exercising certain care one may 
switch between a pseudo-metric space and its metric quotient replacing
the $\aeq[\dist]$-sign with equality.

\subsection{Metric Abelian Monoids}
A monoid is a set equipped with a bivariate associative operation and a neutral element. The operation is usually called multiplication, or addition if it is commutative.
We call a monoid with pseudo-distance $(\Gamma,+,\dist)$ a \term{metric Abelian monoid} if it satisfies: 
\begin{enumerate}
\item 
  For any $\gamma,\gamma'\in\Gamma$ holds
  \[
    \gamma + \gamma' \aeq[\dist] \gamma' + \gamma
  \]   
\item
  The binary operation is 1-Lipschitz with respect to each argument:
  For any $\gamma,\gamma',\gamma''\in\Gamma$
  \[
    \dist(\gamma+\gamma',\gamma+\gamma'')
    \leq
    \dist(\gamma',\gamma'')
  \]
\end{enumerate}

The following proposition is elementary.
\begin{proposition}{p:mam-translations}
  Let $(\Gamma,+,\dist)$ be a metric Abelian monoid. Then:
  \begin{enumerate}
  \item 
    The translations maps 
    \[
      T_{\eta}:\Gamma\to\Gamma,
      \quad
      \gamma\mapsto\gamma+\eta
    \]
    are non-expanding for any $\eta\in\Gamma$.
  \item
    For any quadruple
    $\gamma_{1},\gamma_{2},\gamma_{3},\gamma_{4}\in\Gamma$ holds
    \[
      \dist(\gamma_{1}+\gamma_{2},\gamma_{3}+\gamma_{4})
      \leq
      \dist(\gamma_{1},\gamma_{3}) + \dist(\gamma_{2},\gamma_{4})
    \]
  \item
    For every $n \in \Nbb$, and $\gamma_1, \gamma_2 \in \Gamma$ also
    holds
    \[
      \dist(n\cdot\gamma_1, n\cdot\gamma_2 ) 
      \leq 
      n\cdot\dist(\gamma_1, \gamma_2)
    \]
  \end{enumerate}
\end{proposition}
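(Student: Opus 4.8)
The plan is to derive the three statements in order, each building on the previous, using only the two monoid axioms and the triangle inequality for $\dist$. For part (1), I want to show $\dist(\gamma' + \eta, \gamma'' + \eta) \le \dist(\gamma', \gamma'')$. The 1-Lipschitz axiom directly controls $\dist(\eta + \gamma', \eta + \gamma'')$, where the common summand sits on the \emph{left}, whereas $T_\eta$ adds $\eta$ on the right. The remedy is commutativity, but since it holds only up to metric equivalence I cannot simply rewrite $\gamma' + \eta$ as $\eta + \gamma'$ inside $\dist$. Instead I insert these equivalent points and invoke the triangle inequality:
\[
\dist(\gamma' + \eta, \gamma'' + \eta)
\le
\dist(\gamma' + \eta, \eta + \gamma')
+ \dist(\eta + \gamma', \eta + \gamma'')
+ \dist(\eta + \gamma'', \gamma'' + \eta).
\]
By axiom (1) the first and last terms vanish, and by axiom (2) the middle term is at most $\dist(\gamma', \gamma'')$, which is exactly the desired bound.

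For part (2), I interpolate through the intermediate point $\gamma_3 + \gamma_2$:
\[
\dist(\gamma_1 + \gamma_2, \gamma_3 + \gamma_4)
\le
\dist(\gamma_1 + \gamma_2, \gamma_3 + \gamma_2)
+ \dist(\gamma_3 + \gamma_2, \gamma_3 + \gamma_4).
\]
The first summand is $\dist(T_{\gamma_2}(\gamma_1), T_{\gamma_2}(\gamma_3))$, which is bounded by $\dist(\gamma_1, \gamma_3)$ thanks to part (1); the second summand is bounded by $\dist(\gamma_2, \gamma_4)$ directly by axiom (2). Adding these gives the claimed subadditivity.

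For part (3), writing $n \cdot \gamma$ for the $n$-fold sum $\gamma + \cdots + \gamma$ (well-defined by associativity), I proceed by induction on $n$. The case $n = 1$ is an equality. For the inductive step I apply part (2) with the splitting $n \cdot \gamma_i = \gamma_i + (n-1)\cdot\gamma_i$, obtaining
\[
\dist(n \cdot \gamma_1, n \cdot \gamma_2)
\le
\dist(\gamma_1, \gamma_2) + \dist\big((n-1)\cdot\gamma_1, (n-1)\cdot\gamma_2\big)
\le
n\cdot \dist(\gamma_1, \gamma_2),
\]
where the inductive hypothesis is used in the last step.

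I expect no genuine obstacle, as the proposition is elementary. The only point demanding care is part (1): because commutativity holds merely up to $\aeq[\dist]$ rather than as an identity, one must route through the triangle inequality to absorb the two zero-distance terms instead of substituting $\eta + \gamma'$ for $\gamma' + \eta$ outright. Once part (1) is in place, parts (2) and (3) are a mechanical assembly of the triangle inequality with the two axioms.
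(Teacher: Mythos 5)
Your proof is correct. The paper itself gives no proof of this proposition---it is declared elementary and left to the reader---so there is nothing to compare against; your argument is the natural one and fills the gap. In particular, you correctly isolated the only subtle point: since commutativity holds merely up to $\aeq[\dist]$-equivalence, the right-translation bound in part (1) cannot be obtained by substitution but must be routed through the triangle inequality via the zero-distance points $\eta+\gamma'$ and $\eta+\gamma''$, after which parts (2) and (3) follow mechanically from part (1), the left-Lipschitz axiom, and induction.
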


A metric Abelian monoid $(\Gamma, +, \disth)$ will be called \term{homogeneous} if it satisfies
\[\tageq{contractiondist}
  \disth(n\cdot\gamma_1, n\cdot\gamma_2) 
  =
  n\cdot\disth(\gamma_1, \gamma_2)
\]
A homogeneous metric Abelian monoid is called an
\term{$\Rbb_{\geq0}$-semi-module} $(\Gamma,+,\cdot\,,\disth)$ if in
addition there is a doubly distributive $\Rbb_{\geq0}$-action such that for any
$\lambda_{1},\lambda_{2}\in\Rbb_{\geq0}$ and
$\gamma_{1},\gamma_{2}\in\Gamma$ holds
  \begin{align*}
    \lambda_{1}\cdot(\lambda_{2}\cdot \gamma_{1})
    &\aeq[\disth]
    (\lambda_{1}\lambda_{2})\cdot \gamma_{1}
    \\
    \lambda\cdot(\gamma_{1}+\gamma_{2})
    &\aeq[\disth]
    \lambda\cdot \gamma_{1}+\lambda\cdot \gamma_{2}
    \\
    (\lambda+\lambda')\cdot \gamma_{1}
    &\aeq[\disth] 
    \lambda\cdot \gamma_{1} + \lambda'\cdot \gamma_{1}\\
    \disth(\lambda\cdot\gamma,\lambda\cdot\gamma')  
    &=
    \lambda\cdot\disth(\gamma,\gamma')
  \end{align*}

A convex cone in a normed vector space would be a typical example of
an $\Rbb_{\geq0}$-semimodule.  An intersection of a convex cone in
$\Rbb^{n}$ with the integer lattice is an example of a monoid, that
does not admit semimodule structure.

The following proposition asserts that if a metric Abelian monoid is
homogeneous, then the pseudo-distance is translation invariant, and, in
particular, it satisfies a cancellation property.  This result was
communicated to us by Tobias Fritz, see
also~\cite{Fritz-Resource}, 
\cite[Proposition 3.7]{Matveev-Asymptotic-2018}.

\begin{proposition}{p:translation-invariance}
  Let $(\Gamma,+,\disth)$ be a homogeneous metric Abelian monoid.
  Then the pseudo-distance function $\disth$ is translation invariant, that is
  it satisfies for any $\gamma_{1},\gamma_{2},\eta\in\Gamma$ 
  \[
  \disth(\gamma_{1}+\eta,\gamma_{2}+\eta)
  =
  \disth(\gamma_{1},\gamma_{2})
  \]
  In particular, the following cancellation property holds in $\Gamma$
  \begin{quote}
    If $\gamma_{1}+\eta \aeq[\disth] \gamma_{2}+\eta$, then
    $\gamma_{1}\aeq[\disth]\gamma_{2}$.
  \end{quote}
\end{proposition}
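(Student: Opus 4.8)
The plan is to prove the two inequalities $\disth(\gamma_1+\eta,\gamma_2+\eta)\le\disth(\gamma_1,\gamma_2)$ and $\disth(\gamma_1,\gamma_2)\le\disth(\gamma_1+\eta,\gamma_2+\eta)$ separately. The first direction is immediate: it is exactly the $1$-Lipschitz axiom (2) of a metric Abelian monoid, applied with the common summand $\eta$. All the work lies in the reverse inequality, and the idea is to \emph{amortize}. I will compare the large multiples $n\cdot\gamma_1$ and $n\cdot\gamma_2$, where homogeneity turns the distance into $n\cdot\disth(\gamma_1,\gamma_2)$, and show that inserting a single copy of $\eta$ costs only a bounded amount, independent of $n$.

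The crucial estimate I would establish first is
\[
  \disth(n\cdot\gamma_1+\eta,\,n\cdot\gamma_2+\eta)
  \le
  n\cdot\disth(\gamma_1+\eta,\gamma_2+\eta).
\]
The key is a telescoping path through the points $p_k=(n-k)\cdot\gamma_1+k\cdot\gamma_2+\eta$ for $k=0,\dots,n$, so that $p_0=n\cdot\gamma_1+\eta$ and $p_n=n\cdot\gamma_2+\eta$. At each step I rewrite $p_k=\bigl[(n-k-1)\cdot\gamma_1+k\cdot\gamma_2\bigr]+(\gamma_1+\eta)$ and $p_{k+1}=\bigl[(n-k-1)\cdot\gamma_1+k\cdot\gamma_2\bigr]+(\gamma_2+\eta)$, pairing the single available copy of $\eta$ with whichever copy of $\gamma$ is currently being converted. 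Axiom (2) then bounds each step by $\disth(\gamma_1+\eta,\gamma_2+\eta)$ rather than by $\disth(\gamma_1,\gamma_2)$, and the triangle inequality sums these to the claimed bound. The point to emphasize is that the \emph{same} $\eta$ is re-used at every step, so only one copy is ever needed.

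Next I would combine this with homogeneity and a single triangle inequality. Writing $0$ for the neutral element and $D=\disth(0,\eta)$, the triangle inequality gives
\[
  \disth(n\cdot\gamma_1,\,n\cdot\gamma_2)
  \le
  \disth(n\cdot\gamma_1,\,n\cdot\gamma_1+\eta)
  +\disth(n\cdot\gamma_1+\eta,\,n\cdot\gamma_2+\eta)
  +\disth(n\cdot\gamma_2+\eta,\,n\cdot\gamma_2),
\]
and the two outer ``defect'' terms are each at most $D$ by axiom (2) (translate by $n\cdot\gamma_1$, respectively $n\cdot\gamma_2$, and compare $0$ with $\eta$). Applying homogeneity to the left-hand side and the estimate above to the middle term yields $n\cdot\disth(\gamma_1,\gamma_2)\le n\cdot\disth(\gamma_1+\eta,\gamma_2+\eta)+2D$. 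Dividing by $n$ and letting $n\to\infty$ annihilates the bounded defect and produces $\disth(\gamma_1,\gamma_2)\le\disth(\gamma_1+\eta,\gamma_2+\eta)$, which together with the first direction gives the asserted equality.

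Finally, the cancellation property is an immediate corollary: if $\gamma_1+\eta\aeq[\disth]\gamma_2+\eta$, that is $\disth(\gamma_1+\eta,\gamma_2+\eta)=0$, then by the translation invariance just proved $\disth(\gamma_1,\gamma_2)=0$, i.e. $\gamma_1\aeq[\disth]\gamma_2$. I expect the main obstacle to be precisely the construction of the amortizing path: the naive strategy of adding and then removing $n$ copies of $\eta$ incurs a cost of order $n\cdot D$, which does not vanish after dividing by $n$. The whole trick is that a single reusable $\eta$ keeps the overhead constant while the principal term grows linearly, so that homogeneity can absorb the error in the limit.
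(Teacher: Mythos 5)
Your proof is correct, and there is in fact nothing in the paper to compare it against: the paper states this proposition without proof, attributing it to Tobias Fritz and deferring to \cite{Fritz-Resource} and \cite[Proposition 3.7]{Matveev-Asymptotic-2018}. Your argument is precisely the standard ``catalytic'' argument behind those references. The easy direction is, as you say, axiom (2); for the converse, the essential point is exactly the one you emphasize: a single reusable copy of $\eta$ in the telescoping path $p_k=(n-k)\cdot\gamma_1+k\cdot\gamma_2+\eta$, paired at each step with the copy of $\gamma$ being exchanged, gives $\disth(n\cdot\gamma_1+\eta,n\cdot\gamma_2+\eta)\leq n\cdot\disth(\gamma_1+\eta,\gamma_2+\eta)$, so that the overhead from attaching and detaching $\eta$ stays at the constant $2\disth(0,\eta)$ instead of growing like $n$; homogeneity and the limit $n\to\infty$ then finish the job. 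One cosmetic remark: your rewriting of $p_k$ as $\bigl[(n-k-1)\cdot\gamma_1+k\cdot\gamma_2\bigr]+(\gamma_1+\eta)$ invokes commutativity, which in a metric Abelian monoid holds only up to $\aeq[\disth]$, i.e.\ up to zero $\disth$-distance; this is harmless because a triangle inequality absorbs the rearrangement at no cost, but in the pseudo-metric setting it is worth saying explicitly. The deduction of the cancellation property at the end is immediate, as you note.
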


\subsection{Asymptotic Cones (Tropicalization) of Monoids}
In our construction points of the asymptotic cone of
$(\Gamma,+,\dist)$ will be sequences of points in $\Gamma$ that grow
almost linearly in a certain sense described below.

\subsubsection{Admissible functions} Admissible functions will be used
to measure the deviation of a sequence from being linear. 
We call a
function $\phi:\Rbb_{\geq1}\to\Rbb_{\geq0}$ \term{admissible} if 
\begin{enumerate}
\item 
  the function $\phi$ is non-decreasing;
\item
  \label{i:admissible-integral-bound} 
  there exists a constant $D_{\phi}\geq0$ such that
  $s\cdot\int_{s}^\infty \frac{\phi(t)}{t^2} \d t \leq
  \frac{D_{\phi}}{8}\cdot\phi(s)$ for any $s\geq1$. In particular the
  function $\phi$ is summable against $\d t/t^{2}$.
\end{enumerate}
For example, the function
$\phi(t)\!\!:=\!\!t^{\alpha}$ is admissible for any $0\leq\alpha<1$. Any
admissible function is necessarily sub-linear, that is $\phi(t)/t\to0$
as $t\to\infty$. A linear combination of admissible functions with
non-negative coefficients is also admissible.

\subsubsection{Quasi-linear sequences}
Let $(\Gamma,+,\dist)$ be a metric Abelian monoid and $\phi$ be an
admissible function. A sequence
$\bar\gamma=\set{\gamma(i)}\in\Gamma^{\Nbb_{0}}$ will be called
quasi-linear with defect bounded by $\phi$ if
for every $m, n \in \Nbb$ the following bound is satisfied
\[
\dist\big( \gamma(m+n), \gamma(m) + \gamma(n) \big)
\leq \phi(m + n)
\]
For technical reasons we also require $\gamma(0)=0$.  Sequences that
are quasi-linear with defect bounded by $\phi\equiv0$
will be called \term{linear sequences}.

For an admissible function $\phi$ we will write $\qlin_\phi(\Gamma,
\dist)$ for the space of all quasi-linear sequences with defect
bounded by $C\cdot\phi$ for some (depending on the sequence) constant
$C\geq 0$.
We will also use notation $\lin(\Gamma,
\dist):=\qlin_{0}(\Gamma,\dist)$ for the space of linear sequences.

\subsubsection{Asymptotic distance}
Given two quasi-linear sequences $\bar{\gamma}_1\in
\qlin_{\phi_1}(\Gamma, \dist)$ and $\bar{\gamma}_2 \in
\qlin_{\phi_2}(\Gamma,\dist)$ the sequence of distances $a(n)
:= \dist(\gamma_1(n), \gamma_2(n))$ is $\phi_{3}$-subadditive, where
$\phi_{3}=\phi_{2}+\phi_{2}$ is also admissible, i.e.
\[
a(m+n)\leq a(n) + a(n) + \phi_{3}(n+m)
\]
for any $n,m\in\Nbb$.  By the generalization of Fekete's Lemma by De
Bruijn and Erd\"os \cite[Theorem 23]{Bruijn-Some-1952}, it follows
that the following limit exists and finite
\[
\dista(\bar{\gamma_1},\bar{\gamma_2}) 
:= \lim_{n \to \infty} \frac{1}{n} \dist( \gamma_1(n), \gamma_2(n) )
\]

We call the quantity $\dista(\bar\gamma_{1},\bar\gamma_2)$ the
asymptotic distance between
$\bar\gamma_{1},\bar\gamma_{2}\in\qlin_{\phi}(\Gamma,\dist)$. It is
easy to verify that $\dista$ indeed satisfies all axioms of a
pseudo-distance.  Even if $\dist$ was a proper distance function, the
corresponding asymptotic distance may vanish on some pairs of
non-identical elements.  We call two sequences
$\bar\gamma_{1}\in\qlin_{\phi_{1}}(\Gamma,\dist)$,
$\bar\gamma_{2}\in\qlin_{\phi_{2}}(\Gamma,\dist)$ \term{asymptotically
  equivalent} if $\dista(\bar\gamma_{1},\bar\gamma_{2})=0$ and write
\[
  \bar \gamma_{1} \aeq[\dista]  \bar \gamma_2
\]

\subsubsection{Quasi-homogeneity}
We will show that quasi-linear sequences are also quasi-homogeneous in
the sense  of the following lemma.
\begin{lemma}{l:scaling-of-iteration}
  Let $\bar{\gamma}\in\Gamma^{\Nbb_{0}}$ be a sequence with
  $\phi$-bounded defect.  Then for any $m, n \in \Nbb$
  \[
  \dist( \gamma(m \cdot n) , m \cdot \gamma(n) ) 
  \leq 
  8 \cdot m \cdot n \cdot 
  \int_{n}^{2m\cdot n}\frac{\phi(t)}{t^{2}}\d t
  \]
\end{lemma}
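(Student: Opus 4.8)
The plan is to replace the one-step (linear) telescoping, which would bound $\gamma(mn)$ by repeatedly peeling off a single $\gamma(n)$, by a balanced divide-and-conquer recursion, because the linear approach accumulates the defect $\phi$ once per step and produces a bound of order $m\,\phi(mn)$, far larger than the claimed $mn\int\phi/t^{2}$. Writing $E(k):=\dist(\gamma(kn),k\cdot\gamma(n))$ and $a:=\lceil k/2\rceil$, $b:=\lfloor k/2\rfloor$ (so $a+b=k$ and $a\gamma(n)+b\gamma(n)=k\gamma(n)$ by associativity), the defect bound gives $\dist(\gamma(kn),\gamma(an)+\gamma(bn))\le\phi(kn)$, and part (2) of Proposition~\ref{p:mam-translations} gives $\dist(\gamma(an)+\gamma(bn),a\gamma(n)+b\gamma(n))\le E(a)+E(b)$. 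By the triangle inequality,
\[ E(k)\le E(\lceil k/2\rceil)+E(\lfloor k/2\rfloor)+\phi(kn),\qquad E(0)=E(1)=0. \]

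Second, I would unroll this recursion along its balanced binary tree: each internal node carries a value $k_v\ge2$ and contributes $\phi(k_v n)$, while the leaves (value $1$) contribute nothing, so the total is finite and $E(m)\le\sum_{\text{internal }v}\phi(k_v n)$. A short induction based on the identity $\lceil\lceil m/2^{d}\rceil/2\rceil=\lceil m/2^{d+1}\rceil$ shows that every node at depth $d$ has value at most $\lceil m/2^{d}\rceil$, and there are at most $2^{d}$ of them; moreover internal nodes occur only while $\lceil m/2^{d}\rceil\ge2$, i.e. for $0\le d\le D$ with $D$ the largest such depth (so $2^{D}<m\le 2^{D+1}$). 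Grouping by depth,
\[ E(m)\le\sum_{d=0}^{D}2^{d}\,\phi\big(\lceil m/2^{d}\rceil\,n\big). \]

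Third, I would turn this dyadic sum into the integral. For $0\le d\le D$ one has $m/2^{d}\ge1$, hence $\lceil m/2^{d}\rceil\le 2m/2^{d}$; setting $s_d:=2mn/2^{d}$ (so $s_{d-1}=2s_d$, $s_0=2mn$, and $s_D>2n$) and using monotonicity of $\phi$ gives $\phi(\lceil m/2^{d}\rceil n)\le\phi(s_d)$ and $2^{d}\phi(s_d)=2mn\cdot\phi(s_d)/s_d$. Comparing on $[s_d,s_{d-1}]$, where $\phi(t)\ge\phi(s_d)$, yields $\phi(s_d)/s_d\le 2\int_{s_d}^{s_{d-1}}\phi(t)/t^{2}\,dt$, so the terms with $d\ge1$ telescope to at most $4mn\int_{s_D}^{2mn}\phi/t^{2}\le 4mn\int_n^{2mn}\phi/t^{2}$ (using $s_D>n$), while the root term $\phi(mn)$ is at most $2mn\int_{mn}^{2mn}\phi/t^{2}\le 2mn\int_n^{2mn}\phi/t^{2}$. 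Adding these gives $E(m)\le 6mn\int_n^{2mn}\phi(t)/t^{2}\,dt$, comfortably inside the asserted bound.

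The main obstacle is conceptual: recognizing that the balanced split $k\mapsto(\lceil k/2\rceil,\lfloor k/2\rfloor)$ is precisely what keeps the defect under control, since it makes the tree depth logarithmic and renders the per-level totals geometric in $\phi(s_d)/s_d$, exactly the combination matched by the weight $dt/t^{2}$. The rest is bookkeeping: checking that node values stay $\le\lceil m/2^{d}\rceil$, that the sum terminates at the last internal level $D$ rather than running to infinity (otherwise the leaf contributions would spuriously diverge), and that the lower limit $s_D$ of the telescoped integral stays above $n$ so the range can be enlarged to $[n,2mn]$; the generous constant $8$ then absorbs all the slack.
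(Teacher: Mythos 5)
Your proof is correct and follows essentially the same route as the paper: the paper also proceeds by balanced halving ($m = 2\lfloor m/2\rfloor + \epsilon$), organized as an induction on $m$ rather than an explicit recursion tree, and its key inequality $\psi(s_0)\le 4\int_{s_0}^{s_0+\ln 2}\psi(s)\,\d s$ (with $\psi(s)=\phi(\ebf^s)/\ebf^s$) is exactly your dyadic-block comparison $\phi(s)/s\le 2\int_s^{2s}\phi(t)/t^2\,\d t$ written in logarithmic coordinates. Your level-by-level bookkeeping even yields the slightly better constant $6$ in place of $8$.
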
	
\begin{proof}
  Define the function $\psi$ related to $\phi$ as follows
  \[
  \psi(s):=\phi(\ebf^{s})/\ebf^{s}
  \quad\text{or}\quad
  \phi(t)=:t\cdot\psi(\ln t)
  \]
  The conclusion of the lemma in terms of $\psi$ then reads
  \[
  \dist( \gamma(m \cdot n) , m \cdot \gamma(n) ) 
  \leq 
  8 \cdot m \cdot n \cdot 
  \int_{\ln n}^{\ln(2\cdot m\cdot n)}\psi(s)\d s
  \]
  and it is in that form it will be proven below.  
  
  Due to monotonicity properties of $\phi$ function $\psi$ satisfies,
  for any $0\leq s_{0}\leq s$ 
  \begin{align*}
    \psi(s_{0})
    &\leq
    \psi(s)\cdot\ebf^{s-s_{0}}
    \\
    \tageq{psi-integral}
    \psi(s_{0})
    &\leq
    4\int_{s_{0}}^{s_{0}+\ln2}\psi(s)\d s
  \end{align*}

  We proceed by induction with respect to $m$, keeping $n$ fixed.
  The conclusion of the lemma is obvious for $m=1$. For the induction step
  let $m=2m'+\epsilon\geq 2$, where $m'=\lfloor m/2\rfloor$ and
  $\epsilon\in\set{0,1}$. 
  Then using bound~(\ref{eq:psi-integral}) we estimate
  \begin{align*}
    \dist\big(&\, \gamma(m \cdot n)\,,\, m \cdot \gamma(n) \,\big)
    \\
    &=
    \dist\big(\, 
       \gamma(m'\cdot n + m'\cdot n + \epsilon \cdot n) 
       \,,\, 
        m'\cdot \gamma(n)+m'\cdot \gamma(n)+\epsilon\cdot \gamma(n) \,\big)
    \\
    &\leq
    2\dist\big(\, \gamma(m' \cdot n) \,,\, m' \cdot \gamma(n) \,\big)
    +
    2\phi\big(\,m\cdot n\,\big)
    \\
    &\leq
    16 m'\cdot n\cdot\int_{\ln n}^{\ln(2m'\cdot n)}\psi(s)\d s 
    + 2m\cdot n\cdot\psi\big(\ln(m\cdot n)\big)
    \\
    &\leq
     8m\cdot n
     \left(
        \int_{\ln n}^{\ln(2m'\cdot n)}\psi(s)\d s
        +
        \int_{\ln(m\cdot n)}^{\ln(2m\cdot n)}\psi(s)\d s
     \right)
     \leq
     8 m\cdot n\cdot\!\!\!\int_{\ln n}^{\ln(2m\cdot n)}\psi(s)\d s
  \end{align*}
\end{proof}
Applying bound~(\ref{i:admissible-integral-bound}) in the definition
of admissible functions on page~\pageref{i:admissible-integral-bound}
we obtain the following corollary.
\begin{corollary}{p:psi-homo}
  Let $\bar{\gamma}$ be a sequence with $\phi$-bounded defect. 
  Then for any $m, n \in \mathbb{N}$
  \[
  \dist( \gamma(m \cdot n) , m \cdot \gamma(n) ) 
  \leq 
  8 \cdot m \cdot n \cdot 
  \int_{n}^{\infty}\frac{\phi(t)}{t^{2}}\d t
  \leq D_\phi \cdot m\cdot\phi(n)
  \]
\end{corollary}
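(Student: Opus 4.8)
The plan is to obtain both inequalities as immediate consequences of the preceding Lemma \ref{l:scaling-of-iteration} together with the defining integral bound \ref{i:admissible-integral-bound} for admissible functions; no genuinely new estimate is required, so this is essentially a bookkeeping step that chains two already-established facts.

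For the left-hand inequality I would start from the conclusion of Lemma \ref{l:scaling-of-iteration}, namely
\[
\dist(\gamma(m\cdot n), m\cdot\gamma(n))
\leq
8\cdot m\cdot n\cdot\int_{n}^{2m\cdot n}\frac{\phi(t)}{t^{2}}\d t.
\]
Since $\phi$ takes values in $\Rbb_{\geq0}$, the integrand $\phi(t)/t^{2}$ is non-negative, so enlarging the domain of integration from the finite interval $[n,2mn]$ to the ray $[n,\infty)$ can only increase the integral. This yields the first inequality at once.

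For the right-hand inequality I would invoke property \ref{i:admissible-integral-bound} of the definition of admissibility, which asserts that $s\cdot\int_{s}^{\infty}\frac{\phi(t)}{t^{2}}\d t\leq\frac{D_{\phi}}{8}\cdot\phi(s)$ for every $s\geq1$. Specializing to $s=n$ and multiplying both sides by $8m$ gives
\[
8\cdot m\cdot n\cdot\int_{n}^{\infty}\frac{\phi(t)}{t^{2}}\d t
\leq
8m\cdot\frac{D_{\phi}}{8}\cdot\phi(n)
=
D_{\phi}\cdot m\cdot\phi(n),
\]
which is exactly the claimed bound.

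The only conceptual remark is that there is no real obstacle here: all of the analytic content has already been discharged, the induction on $m$ and the key doubling estimate \ref{eq:psi-integral} inside Lemma \ref{l:scaling-of-iteration}, and the summability-type bound inside the admissibility hypothesis. The corollary simply records that, for a quasi-linear sequence, the scaling defect $\dist(\gamma(m\cdot n),m\cdot\gamma(n))$ is controlled by $\phi(n)$ uniformly in $m$ up to the multiplicative constant $D_{\phi}$, a form that will be more convenient in later applications than the integral expression of the lemma.
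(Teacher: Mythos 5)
Your proof is correct and matches the paper exactly: the paper derives this corollary from Lemma \ref{l:scaling-of-iteration} by enlarging the integration domain to $[n,\infty)$ and then applying the admissibility bound with $s=n$, precisely as you do. No further comment is needed.
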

\subsubsection{The semi-module structure}
The group operation $+$ on $\Gamma$ induces a $\dista$-continuous (in
fact, 1-Lipschitz) group operation on $\qlin_{\phi}(\Gamma,\dist)$ by
adding sequences element-wise. Thus
$(\qlin_{\phi}(\Gamma,\dist),+,\dista)$ is also a metric Abelian
monoid. In addition, it carries the structure of a
$\Rbb_{\geq0}$-semi-module, as explained below.

The validity of the following constructions is very easy to verify, so
we omit the proofs.  Let $\phi>0$ be an admissible function.  The set
$\qlin_{\phi}(\Gamma,\dist)$ admits an action of the multiplicative
semigroup $(\Rbb_{\geq0},\,\cdot\,)$ defined in the following way. Let
$\lambda\in\Rbb_{\geq 0}$ and
$\bar\gamma=\set{\gamma(n)}\in\qlin_{\phi}(\Gamma,\dist)$.  Then
define the action of $\lambda$ on $\bar\gamma$ by
\[\tageq{R-action} 
  {\lambda\cdot\bar\gamma}
  := 
  \set{\gamma\big(\lfloor\lambda\cdot n\rfloor\big)}_{n\in\Nbb_{0}} 
\]
This is only an action up to asymptotic equivalence.  Similarly, in
the constructions that follow we are tacitly assuming they are valid
up to asymptotic equivalence.

The action 
\[
  \cdot:\Rbb_{\geq 0}\times\qlin_{\phi}(\Gamma,\dist)
  \to
  \qlin_{\phi}(\Gamma,\dist)
\]
is continuous with respect to $\dista$ and, moreover it is
a homothety (dilation), that is
\[
  \dista(\lambda\cdot\bar\gamma_{1},
         \lambda\cdot\bar\gamma_{2})
  =
  \lambda\cdot\dista(\bar\gamma_{1},\bar\gamma_{2})
\]
The semigroup structure on $\qlin_{\phi}(\Gamma,\dist)$
is distributive with respect to the $\Rbb_{\geq 0}$-action
\begin{align*}
  \lambda\cdot(\bar\gamma_{1}+\bar\gamma_{2})
  &=
  \lambda\cdot\bar\gamma_{1}+
  \lambda\cdot\bar\gamma_{2}
  \\
  (\lambda_{1}+\lambda_{2})\cdot\bar\gamma
  &\aeq[\dista]
  \lambda_{1}\cdot\bar\gamma+
  \lambda_{2}\cdot\bar\gamma
\end{align*}

In particular, for $n \in \mathbb{N}$ and $\bar\gamma\in\qlin_{\phi}(\Gamma,\dist)$
\[
  \underbrace{\bar{\gamma}+\cdots+\bar{\gamma}}_{n} 
  \aeq[\dista] 
  n\cdot\bar\gamma
\]
  
\subsubsection{Completeness}
Here, we introduce additional conditions on a metric Abelian monoid
$(\Gamma,+,\dist)$, that guarantee that $(\qlin_{\phi}(\Gamma),\dista)$ is a
complete metric space.

Suppose $\phi$ is an admissible function and $(\Gamma,+,\dist)$ is a
metric Abelian monoid satisfying the following additional property: there exists a constant $C>0$, such that for
any quasi-linear sequence $\bar\gamma\in\qlin_\phi(\Gamma,\dist)$,
there exists an asymptotically equivalent quasi-linear sequence
$\bar\gamma'$ with defect bounded by $C \phi$. 
Note that, contrary to the situation in the definition of $\qlin_\phi(\Gamma, \dist)$, the constant $C$ is now not allowed to depend on the sequence.
If this is the case, we
say that $\qlin_{\phi}(\Gamma,\dist)$ has the
($C$-)\term{uniformly bounded defect property}.

\begin{proposition}{p:boundeddefect}
  Suppose a metric Abelian monoid $(\Gamma,+,\disth)$ and an
  admissible function $\phi>0$ are such that
  $(\qlin_\phi(\Gamma,\disth),\distha)$ has the uniformly bounded
  defect property and the distance function $\disth$ is
  homogeneous. Then the space $(\qlin_\phi(\Gamma,\disth),\distha)$ is
  complete.
\end{proposition}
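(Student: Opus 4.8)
The plan is to take a $\distha$-Cauchy sequence $(\bar\gamma_k)_{k}$ in $\qlin_\phi(\Gamma,\disth)$ and manufacture a limit. First I would invoke the uniformly bounded defect property to replace each $\bar\gamma_k$ by an asymptotically equivalent sequence whose defect is bounded by the \emph{same} $C\phi$; since asymptotic equivalence means vanishing $\distha$-distance, this alters neither the Cauchy property nor the eventual limit, so from now on every $\bar\gamma_k$ has defect $\le C\phi$ and $\gamma_k(0)=0$. Passing to a subsequence, I may also assume $b_k:=\distha(\bar\gamma_k,\bar\gamma_{k+1})\le 4^{-k}$, so that a limit of the subsequence is automatically a limit of the whole sequence.

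The heart of the argument is an \emph{effective} comparison estimate: for any two sequences $\bar\alpha,\bar\beta$ with defect bounded by $C\phi$,
\[
\disth\big(\alpha(n),\beta(n)\big)\;\le\; n\cdot\distha(\bar\alpha,\bar\beta)\;+\;2CD_\phi\,\phi(n)
\qquad\text{for all }n\in\Nbb .
\]
To prove it, fix $n$ and estimate, for arbitrary $m$, using homogeneity of $\disth$ and then Corollary \ref{p:psi-homo} applied to both $\bar\alpha$ and $\bar\beta$,
\[
m\cdot\disth\big(\alpha(n),\beta(n)\big)
=\disth\big(m\cdot\alpha(n),m\cdot\beta(n)\big)
\le \disth\big(\alpha(mn),\beta(mn)\big)+2CD_\phi\,m\,\phi(n).
\]
Dividing by $m$ and letting $m\to\infty$, the term $\disth(\alpha(mn),\beta(mn))/m$ tends to $n\cdot\distha(\bar\alpha,\bar\beta)$, which yields the claim. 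The decisive point—and the step I expect to be the main obstacle—is that the error $2CD_\phi\phi(n)$ is governed by the \emph{uniform} defect constant $C$ and does \emph{not} grow with $\distha(\bar\alpha,\bar\beta)$; this is exactly what prevents the $\phi$-errors from accumulating in the construction below, and it is where both homogeneity and the uniform defect bound are essential. A naive diagonal limit fails precisely because $\distha$ a priori controls only the large-$n$ behaviour, not the values at any fixed scale.

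With this estimate in hand I would build the limit by a diagonal procedure. Choose a non-decreasing $k(\cdot):\Nbb\to\Nbb$ with $k(n)\to\infty$ and $n\cdot 4^{-k(n)}\le\phi(n)$ (possible since $n/\phi(n)\to\infty$ by sub-linearity of $\phi$), and set $\gamma_\infty(n):=\gamma_{k(n)}(n)$ with $\gamma_\infty(0):=0$. For convergence, apply the estimate to $\bar\alpha=\bar\gamma_j$, $\bar\beta=\bar\gamma_{k(n)}$, using $\distha(\bar\gamma_j,\bar\gamma_{k(n)})\le\sum_{i\ge j}b_i\le\tfrac43 4^{-j}$; dividing $\disth(\gamma_j(n),\gamma_{k(n)}(n))$ by $n$ and letting $n\to\infty$ gives $\distha(\bar\gamma_j,\bar\gamma_\infty)\le\tfrac43 4^{-j}\to 0$. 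For quasi-linearity, write $p=m+n$ with $m\le n$, insert the term $\gamma_{k(p)}(m)+\gamma_{k(p)}(n)$, and bound the three resulting pieces: the defect term $\disth(\gamma_{k(p)}(p),\gamma_{k(p)}(m)+\gamma_{k(p)}(n))\le C\phi(p)$, while each index-change term such as $\disth(\gamma_{k(p)}(m),\gamma_{k(m)}(m))$ is controlled, via the effective estimate, by $m\sum_{i\ge k(m)}b_i+2CD_\phi\phi(m)\le m\cdot\tfrac43 4^{-k(m)}+2CD_\phi\phi(m)\le(\tfrac43+2CD_\phi)\phi(p)$, a fixed multiple of $\phi(p)$. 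Summing, $\bar\gamma_\infty$ has defect bounded by a constant times $\phi$, so $\bar\gamma_\infty\in\qlin_\phi(\Gamma,\disth)$; combined with $\distha(\bar\gamma_k,\bar\gamma_\infty)\to0$ and the Cauchy property, the whole sequence converges, which establishes completeness.
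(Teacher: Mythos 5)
Your proof is correct and follows essentially the same route as the paper's: reduce to uniform defect $C\phi$ via the uniformly bounded defect property, derive the pointwise comparison estimate $\disth\big(\alpha(n),\beta(n)\big)\leq n\cdot\distha(\bar\alpha,\bar\beta)+2CD_\phi\,\phi(n)$ from homogeneity together with Corollary~\ref{p:psi-homo}, and assemble the limit by a diagonal construction whose quasi-linearity and convergence are checked with that estimate. The only differences are bookkeeping: you pass to a geometrically Cauchy subsequence and tie the diagonal index $k(n)$ to $\phi$ (where the paper uses an index function $\ibf(n)$ with rate $1/n$), which also lets you verify convergence slightly more directly than the paper's division-with-remainder computation.
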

\begin{proof}
  Given a Cauchy sequence $\set{\bar\gamma_{i}}$ of elements in
  $(\qlin_\phi(\Gamma,\disth),\distha)$ we need to find a limit element
  $\bar\eta\in\qlin_\phi(\Gamma,\disth)$.  
  We will construct $\bar\eta$ by a diagonal argument.
  First we replace each element
  of the sequence $\set{\bar\gamma_{i}}$ by an asymptotically
  equivalent element with defect bounded by $C \phi$
  according to the assumption of the proposition. We
  will still call the new sequence $\set{\bar\gamma_{i}}$. In fact, we
  may without loss of generality assume that $C=1$.

  We begin by establishing a bound on the divergence of the tails of
  sequences $\bar\gamma_{i}$ and $\bar\gamma_{j}$.  By homogeneity of
  $\disth$ and Corollary \ref{p:psi-homo}, it holds for any
  $n,k\in\Nbb$ that
  \begin{align*}
    k \cdot \disth\big(\gamma_{i}(n),\gamma_{j}(n)\big)
    &=
    \disth\big(k\cdot\gamma_{i}(n),k\cdot\gamma_{j}(n)\big)
    \\
    &\leq
    \disth\big(\gamma_{i}(k\cdot n),\gamma_{j}(k\cdot n)\big) 
    + 
    2k \cdot D_{\phi}\cdot \phi(n)    
  \end{align*}
  Dividing by $k$ and passing to the limit $k\to\infty$, while keeping
  $n$ fixed, we obtain
  \[
    \disth(\gamma_{i}(n),\gamma_{j}(n))
    \leq
    n\cdot\distha(\bar\gamma_{i},\bar\gamma_{j}) 
    + 
    2D_{\phi}\cdot\phi(n)
  \]	 
  Since the sequence $(\bar\gamma_{i})_{i\in\Nbb_{0}}$ is Cauchy, it
  follows that for any $n \in \Nbb$ there is a number
  $\ibf(n)\in\Nbb$ such that for any $i,j\geq\ibf(n)$ holds
  \[
  \distha(\bar\gamma_{i},\bar\gamma_{j})\leq \frac{1}{n}
  \]
  Then for any $i,j,n\in\Nbb$ with $i,j\geq \ibf(n)$ we have
  the following bound
  \[\tageq{boundedmembers}
  \disth\big(\gamma_{i}(n),\gamma_{j}(n)\big)
  \leq
  2D_{\phi} \cdot \phi(n) + 1
  \]	
  Now we are ready to define the limiting sequence $\bar\eta$ by
  setting
  \[
  \eta(n):=\gamma_{\ibf(n)}(n)
  \]
  First we verify that $\bar\eta$ is quasi-linear. For $m, n \in
  \Nbb$, we have 
  \begin{align*}
    \disth
    \big(
      \eta(n+m),
      &\eta(n)+\eta(m)
    \big)
    =
    \disth
    \big(\,
      \gamma_{\ibf(n+m)}(n+m) \,,\,
      \gamma_{\ibf(n)}(n)+\gamma_{\ibf(m)}(m)
    \,\big)
    \\
    &\leq
    \disth
    \big(\,
      \gamma_{\ibf(n+m)}(n+m) \,,\,
      \gamma_{\ibf(n+m)}(n) + \gamma_{\ibf(n+m)}(m)\,
    \big)\;+
    \\
    &\quad
    \disth
    \big(\,
      \gamma_{\ibf(n+m)}(n)+\gamma_{\ibf(n+m)}(m) \,,\,
      \gamma_{\ibf(n)}(n)+\gamma_{\ibf(m)}(m)\,
    \big)
    \\
    &\leq
    \phi(n+m) 
    + 
    2D_{\phi}\cdot\phi(n) + 1 +
    2D_{\phi}\cdot\phi(m) + 1
    \\
    &\leq 
    (4D_{\phi}+1)\phi(n+m) + 2
    \leq C' \cdot \phi(n + m)
  \end{align*}
  for some constant $C'> 0$.

  The convergence of $\bar\gamma_{i}$ to $\bar\eta$ is shown as
  follows. For $n,k\in\Nbb$ let $q_n,r_n\in\Nbb_{0}$ be the quotient and
  the remainder of the division of $n$ by $k$, that is $n=q_n\cdot k+r_n$
  and $0\leq r_n < k$.  Fix $k\in\Nbb$ and let $i\geq\ibf(k)$, then
  \begin{align*}
    \distha(\bar\gamma_{i},\bar\eta)
    &=
    \lim_{n\to\infty}\frac1n
    \disth\big(\gamma_{i}(n),\eta(n)\big)
    \\
    &=
    \lim_{n\to\infty}
    \frac1n
    \disth
    \big(\,
      \gamma_{i}(q_n\cdot k+r_n) \,,\,
      \gamma_{\ibf(n)}(q_n\cdot k+r_n)\,
    \big)
    \\
    &\leq
    \limsup_{n\to\infty}
    \frac1n
    \Big(\rule{0mm}{5mm}
      q_n\cdot
      \disth\big(\gamma_{i}(k),\gamma_{\ibf(n)}(k)\big)+
      \disth\big(\gamma_{i}(r_n),\gamma_{\ibf(n)}(r_n)\big) \;+
    \\
    &\quad\quad\quad\quad\quad\;\;+
      4q_n \cdot D_{\phi} \cdot \phi(k)+2 \phi(n)
    \Big)
    \\
    &\leq
    \limsup_{n\to\infty}
    \frac1n
    \Big(\rule{0mm}{5mm}
    q_n \cdot (2D_\phi\cdot\phi(k) + 1) 
    + 
    (2D_\phi\cdot\phi(r_n) + 1) 
    \;+
    \\
    &\quad\quad\quad\quad\quad\;\;+
    4q_n \cdot D_\phi \cdot \phi(k)+2 \phi(n)
    \Big)
    \\
    &=C'' \cdot  \phi(k)/k
  \end{align*}
  Since $k\in\Nbb$ is arbitrary and $\phi$ is sub-linear we have 
  \[
  \lim_{i\to\infty}\distha(\bar\gamma_{i},\bar\eta)=0
  \]
\end{proof}

\subsubsection{On the density of linear sequences}\hspace{-1pt}
For a metric Abelian monoid $(\Gamma, +, \dist)$ together with an
admissible function  $\phi$ we say that $\qlin_{\phi}(\Gamma,\dist)$ has the
\term{vanishing defect property} if for every $\epsilon > 0$ and for
every $\bar\gamma \in \qlin_\phi(\Gamma, \dist)$ there exists an
asymptotically equivalent quasi-linear sequence $\bar\gamma'$ with
defect bounded by another admissible function $\psi$ such that
$\int_{1}^{\infty}\frac{\psi(t)}{t^{2}}\d t<\epsilon$.

The proposition below gives a sufficient condition under which the
linear sequences are dense in the space of quasi-linear sequences.

\begin{proposition}{p:eps-linear-dense}
  Suppose $(\Gamma, + , \dist)$ and admissible function $\phi$ have the vanishing defect property. Then
  $\lin(\Gamma,\dist)$ is dense in $(\qlin_\phi(\Gamma,\dist),\dista)$.
\end{proposition}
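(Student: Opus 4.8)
The plan is to approximate an arbitrary quasi-linear sequence by the honest linear sequence generated by its first term, with the approximation error controlled by the defect integral. Fix $\bar\gamma\in\qlin_\phi(\Gamma,\dist)$ and $\epsilon>0$. First I would invoke the vanishing defect property with threshold $\epsilon/8$: this produces an asymptotically equivalent quasi-linear sequence $\bar\gamma'$ whose defect is bounded by some admissible function $\psi$ with $\int_{1}^{\infty}\psi(t)/t^{2}\,\d t<\epsilon/8$. Since $\bar\gamma\aeq[\dista]\bar\gamma'$ means $\dista(\bar\gamma,\bar\gamma')=0$, by the triangle inequality for $\dista$ it suffices to exhibit a linear sequence within $\epsilon$ of $\bar\gamma'$.

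Next I would set $\lambda(n):=n\cdot\gamma'(1)$ and take $\bar\lambda=\set{\lambda(n)}$ as the approximating sequence. This sequence is genuinely linear, not merely quasi-linear: by associativity of the monoid operation one has, exactly as elements of $\Gamma$,
\[
\lambda(m+n)=(m+n)\cdot\gamma'(1)=m\cdot\gamma'(1)+n\cdot\gamma'(1)=\lambda(m)+\lambda(n),
\]
together with $\lambda(0)=0$, so its defect vanishes identically and $\bar\lambda\in\lin(\Gamma,\dist)$.

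The estimate then comes directly from Corollary \ref{p:psi-homo} applied to $\bar\gamma'$ (which has $\psi$-bounded defect) with the choice $n=1$: for every $m\in\Nbb$,
\[
\dist\big(\gamma'(m),\,m\cdot\gamma'(1)\big)\leq 8m\int_{1}^{\infty}\frac{\psi(t)}{t^{2}}\,\d t.
\]
Dividing by $m$ and passing to the limit $m\to\infty$ yields $\dista(\bar\gamma',\bar\lambda)\leq 8\int_{1}^{\infty}\psi(t)/t^{2}\,\d t<\epsilon$, whence $\dista(\bar\gamma,\bar\lambda)\leq\dista(\bar\gamma,\bar\gamma')+\dista(\bar\gamma',\bar\lambda)<\epsilon$. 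As $\bar\gamma$ and $\epsilon$ were arbitrary, this shows $\lin(\Gamma,\dist)$ is dense in $(\qlin_\phi(\Gamma,\dist),\dista)$.

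I do not expect a serious obstacle: all the analytic content is packaged in Corollary \ref{p:psi-homo} (ultimately the integral bound in the definition of admissible functions), and the vanishing defect property is precisely the hypothesis that lets us drive the defect integral below any prescribed threshold. The only point that warrants a line of justification is verifying that $n\mapsto n\cdot\gamma'(1)$ is truly linear rather than quasi-linear, which is immediate from associativity of $+$.
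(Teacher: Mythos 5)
Your proposal is correct and follows essentially the same route as the paper: the paper likewise invokes the vanishing defect property to replace $\bar\gamma$ by an asymptotically equivalent sequence $\bar\gamma_i$ with small defect integral, approximates it by the linear sequence $n\mapsto n\cdot\gamma_i(1)$, and bounds the asymptotic distance by $8\int_1^\infty \phi_i(t)/t^2\,\d t$ via the same scaling estimate (Corollary~\ref{p:psi-homo}). The only cosmetic difference is that you work with a fixed $\epsilon$ and an explicit triangle inequality, while the paper indexes the approximants by $i$ with defect integral below $1/i$.
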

\begin{proof}
  Let $\bar\gamma=\set{\gamma(n)}$ be a quasi-linear sequence.  For
  any $i\in\Nbb$ select a sequence $\bar\gamma_{i}$ asymptotically
  equivalent to $\bar\gamma$ with defect bounded by an admissible
  function $\phi_{i}$ such that
  $\int_{1}^{\infty}\frac{\phi_{i}(t)}{t^{2}}\d t<1/i$ according to
  the ``vanishing defect'' assumption of the lemma.
  
  Define $\bar{\eta}_i$ by
  \[
    \eta_i(n) := n \cdot \gamma_i(1)
  \]
  Then
  \begin{align*}
    \dista(\bar\gamma,\bar\eta_{i})
    &=
    \dista(\bar\gamma_{i},\bar\eta_{i})
    =
    \lim_{n\to\infty}
    \frac1n
    \dist(\gamma_{i}(n),\eta_{i}(n))
    =
    \lim_{n\to\infty}
    \frac1n
    \dist\big(\gamma_{i}(n),n\cdot\gamma_{i}(1)\big)
    \\
    &\leq
    8\int_{1}^{\infty}\frac{\phi_{i}(t)}{t^{2}}\d t
    \leq \frac{8}{i}
  \end{align*}  	
Thus, any quasi-linear sequence can be approximated by linear
sequences. 
\end{proof}

\subsubsection{Asymptotic distance on original monoid}
\label{suse:metric-original-group}
Starting with an element $\gamma\in\Gamma$ one can construct a linear
sequence $\vec\gamma=\set{i\cdot\gamma}_{i\in\Nbb_{0}}$. In view of
Proposition~\ref{p:mam-translations}, the map
\[\tageq{inclusions-abstract}
  \vec\cdot:\big(\Gamma, \dist \big) \to \big(\lin(\Gamma,\dist), \dista\big)
\]
is a contraction.

 By the inclusions in (\ref{eq:inclusions-abstract}) we have an induced metric
$\disth$ on $\Gamma$, satisfying for any
$\gamma_{1},\gamma_{2}\in\Gamma$
\[\tageq{deltalessd}
  \disth(\gamma_{1},\gamma_{2})\leq\dist(\gamma_{1},\gamma_{2})
\]
and the following homogeneity condition
\[\tageq{deltalin}
  \disth(n\cdot\gamma_{1},n\cdot\gamma_{2})
  =
  n\cdot\disth(\gamma_{1},\gamma_{2})
\]
for all $n \in \Nbb_0$.

Note that if $\dist$ was homogeneous to begin with, then
$\disth$ coincides with $\dist$ on $\Gamma$.

By virtue of the bound $\disth \leq \dist$, sequences that are
quasi-linear with respect to $\disth$ are also quasi-linear with
respect to $\dist$.  Since $\disth$ is scale-invariant, the associated
asymptotic distance $\distha$ coincides with $\disth$ on $\Gamma$. We
will show (in Lemma \ref{p:dist-dista-isometry-dense} below) that
$\distha$ also coincides with $\dista$ on $\dist$-quasi-linear
sequences.

Let $\phi$ be an admissible function. In order to organize all these
statements, and to be more precise, let us include the spaces in the
following commutative diagram.
\[\tageq{tropical-diagram-0}
  \begin{cd}[row sep=tiny]
    \mbox{}
    \&
    \big(\lin(\Gamma,\dist), \dista \big)
    \arrow[hookrightarrow]{dd}{\i_{1}}
    \arrow[hookrightarrow]{r}{\j_{1}}
    \&
    \big(\qlin_{\phi}(\Gamma,\dist), \dista \big)
    \arrow[hookrightarrow]{dd}{\i_{2}}
    \\
    (\Gamma,\dist)
    \arrow{ru}{f}
    \arrow{rd}{f'}
    \\
    \mbox{}
    \&
    \big(\lin(\Gamma,\disth),\distha\big)
    \arrow[hookrightarrow]{r}{\j_{2}}
    \&
    \big(\qlin_{\phi}(\Gamma,\disth),\distha\big)
  \end{cd}
  \]

  The maps $f, f'$ and $\i_1$ are isometries.  The maps $\j_1$ and
  $\j_2$ are isometric embeddings.  The next lemmas show that $\i_2$
  is also an isometric embedding, and it has dense image.

  \begin{lemma}{p:dist-dista-isometry-dense}
    Let $\phi$ be a positive, admissible function. Then, the natural inclusion 
    \[
    \i_{2}:\big(\qlin_{\phi}(\Gamma,\dist),\dista\big)
    \into
    \big(\qlin_{\phi}(\Gamma,\disth),\distha\big)
    \]
    is an isometric embedding with the dense image.
  \end{lemma}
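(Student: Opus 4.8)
The plan is to prove the two assertions---isometry and density---separately, in both cases leaning on the quasi-homogeneity estimate of Corollary~\ref{p:psi-homo}. First I would record the preliminaries. Since $\disth\leq\dist$ by~(\ref{eq:deltalessd}), any sequence with $\dist$-defect bounded by $C\phi$ automatically has $\disth$-defect bounded by $C\phi$, so $\i_2$ is well defined; and passing the pointwise inequality $\disth\leq\dist$ through the defining limits gives $\distha(\bar\gamma_1,\bar\gamma_2)\leq\dista(\bar\gamma_1,\bar\gamma_2)$ for free. I would also note that $(\Gamma,+,\disth)$ is itself a metric Abelian monoid: commutativity up to $\aeq[\disth]$ and the $1$-Lipschitz property of $+$ both descend from the corresponding statements for $\dist$ through the limit defining $\disth$. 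Hence Corollary~\ref{p:psi-homo} applies to sequences regarded as $\disth$-quasi-linear as well.

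For the reverse inequality $\dista\leq\distha$, fix $\bar\gamma_1,\bar\gamma_2\in\qlin_\phi(\Gamma,\dist)$ with defects bounded by $C_1\phi$ and $C_2\phi$. Combining the triangle inequality with the bound $\dist(\gamma_i(kn),k\cdot\gamma_i(n))\leq C_iD_\phi\,k\,\phi(n)$ of Corollary~\ref{p:psi-homo} gives
\[
  \dist\big(\gamma_1(kn),\gamma_2(kn)\big)
  \leq
  \dist\big(k\cdot\gamma_1(n),k\cdot\gamma_2(n)\big)
  +(C_1+C_2)D_\phi\,k\,\phi(n).
\]
Dividing by $kn$ and letting $k\to\infty$, the left side tends to $\dista(\bar\gamma_1,\bar\gamma_2)$ while $\tfrac1k\dist(k\cdot\gamma_1(n),k\cdot\gamma_2(n))\to\disth(\gamma_1(n),\gamma_2(n))$ by the very definition of $\disth$, so that
\[
  \dista(\bar\gamma_1,\bar\gamma_2)
  \leq
  \tfrac1n\disth\big(\gamma_1(n),\gamma_2(n)\big)
  +\tfrac{(C_1+C_2)D_\phi\,\phi(n)}{n}.
\]
Letting $n\to\infty$ and using sublinearity of $\phi$ collapses the right-hand side to $\distha(\bar\gamma_1,\bar\gamma_2)$; together with the easy direction this yields $\dista=\distha$, i.e. $\i_2$ is an isometric embedding.

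For the density of the image I would approximate an arbitrary $\bar\eta\in\qlin_\phi(\Gamma,\disth)$, with $\disth$-defect bounded by $C\phi$, by the staircase sequences $L_n(k):=\lfloor k/n\rfloor\cdot\eta(n)$. Consecutive values of $L_n$ differ by at most one copy of $\eta(n)$, so $1$-Lipschitzness of $+$ bounds the $\dist$-defect of $L_n$ by the constant $\dist(\eta(n),0)$; as $\phi$ is positive and non-decreasing this constant is $\leq\big(\dist(\eta(n),0)/\phi(1)\big)\phi$, whence $L_n\in\qlin_\phi(\Gamma,\dist)$ lies in the image of $\i_2$. Writing $k=mn+r$ with $0\leq r<n$ and bounding $\disth(\eta(k),m\cdot\eta(n))$ by the defect bound, Corollary~\ref{p:psi-homo} for $\disth$, and the bounded boundary term $\disth(\eta(r),0)$, division by $k\geq mn$ and the limit $k\to\infty$ give $\distha(\bar\eta,L_n)\leq C D_\phi\,\phi(n)/n$, which tends to $0$ as $n\to\infty$.

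The main obstacle is precisely this density step: a $\disth$-quasi-linear sequence need not be $\dist$-quasi-linear, so one cannot take $\bar\eta$ itself as a preimage. The staircase construction circumvents this by trading the uncontrolled $\dist$-defect of $\bar\eta$ for a merely \emph{constant} $\dist$-defect, which is invisible to the asymptotic distance $\distha$ (being divided by $k\to\infty$), while quasi-homogeneity keeps $\distha(\bar\eta,L_n)$ small. Everything else reduces to the scaling estimate of Lemma~\ref{l:scaling-of-iteration}/Corollary~\ref{p:psi-homo} and the monoid inequalities of Proposition~\ref{p:mam-translations}, which I would invoke rather than reprove.
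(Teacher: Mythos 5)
Your proof is correct and takes essentially the same route as the paper: the reverse inequality $\dista\leq\distha$ via the quasi-homogeneity estimate of Corollary~\ref{p:psi-homo} applied along the subsequence $k\cdot n$ (using that $\tfrac1k\dist(k\cdot\gamma_1(n),k\cdot\gamma_2(n))\to\disth(\gamma_1(n),\gamma_2(n))$), and density via the same staircase sequences $\lfloor k/n\rfloor\cdot\eta(n)$, whose constant $\dist$-defect is absorbed by positivity of $\phi$ and vanishes under the asymptotic distance. Your explicit observation that $(\Gamma,+,\disth)$ is itself a metric Abelian monoid, so that Corollary~\ref{p:psi-homo} legitimately applies to $\disth$-quasi-linear sequences, is a point the paper uses only implicitly.
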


\begin{proof}
  First we show that the map $\i_{2}$ is an isometric embedding.  Let
  $\bar\gamma_{1},\bar\gamma_{2}\in\qlin_{\phi}(\Gamma,\dist)$ be two
  $\phi$-quasi-linear sequences with respect to the distance function
  $\dist$. We have to
  show that the two numbers
  \[
    \dista(\bar\gamma_{1},\bar\gamma_{2})
    =
    \lim_{n\to\infty}\frac1n \dist\big(\gamma_{1}(n),\gamma_{2}(n)\big)
  \]
  and
  \[
    \distha(\bar\gamma_{1},\bar\gamma_{2})
    =
    \lim_{n\to\infty}\frac1n \disth\big(\gamma_{1}(n),\gamma_{2}(n)\big)
  \]
  are equal.  Since shifts are non-expanding maps, we have
  $\disth\leq\dist$ and it follows immediately that
  \[
    \distha(\bar\gamma_{1},\bar\gamma_{2})
    \leq
    \dista(\bar\gamma_{1},\bar\gamma_{2})
  \]
  and we are left to show the opposite inequality.
  We will do it as follows. Fix $n>0$, then
  \begin{align*}
    \dista(\bar\gamma_{1},\bar\gamma_{2})
    &=
    \lim_{k\to\infty}
    \frac{1}{k\cdot n}
    \dist\big(\gamma_1(k\cdot n),\gamma_{2}(k\cdot n)\big)
    \\
    &\leq
    \lim_{k\to\infty}\frac{1}{k\cdot n}
    \bigg(
      \dist\big(k\cdot\gamma_1(n),k\cdot\gamma_{2}(n)\big)
      +
      2 k\cdot D_\phi \cdot \phi(n)
    \bigg)
    \\
    &\leq
    \frac1n\dista\big(\gamma_{1}(n),\gamma_{2}(n)\big)
    +
    2D_{\phi}\frac{\phi(n)}{n}
  \end{align*}	
  Passing to the limit with respect to $n$ gives the required inequality
  \[
  \dista(\bar\gamma_{1},\bar\gamma_{2})
  \leq
  \distha(\bar\gamma_{1},\bar\gamma_{2})
  \]
  
  Now we will show that the image of $\i_{2}$ is dense.
  Given an element $\bar\gamma=\set{\gamma(n)}$ in
  $\qlin_{\phi}(\Gamma,\dista)$ we have to find a $\distha$-approxi\-ma\-ting
  sequence $\bar\gamma_{i}=\set{\gamma_{i}(n)}$ in
  $\qlin_{\phi}(\Gamma,\dist)$.
  Define 
  \[
    \gamma_{i}(n)
    :=
    \lfloor\frac{n}{i}\rfloor\cdot\gamma(i)
  \]
  We have to show that each $\bar\gamma_{i}$ is $\dist$-quasi-linear
  and that $\distha(\bar\gamma_{i},\bar\gamma)\too[i\to\infty]0$.
  These statements follow from
  \begin{align*}
    \dist\big(\gamma_i(m+n), \gamma_i(m) + \gamma_i(n) \big)
    &=
    \dist
    \left( 
      \left\lfloor \frac{m+n}{i}\right\rfloor\cdot\gamma(i), 
      \left\lfloor \frac{m}{i}\right\rfloor\cdot\gamma(i)
      +
      \left\lfloor \frac{n}{i} \right\rfloor\cdot\gamma(i)
    \right)
    \\
    &\leq 
    \dist \left( \gamma(i), \mathbf 0 \right)\\
    &\leq C_i \cdot \phi(m+n)
  \end{align*} 
  for some $C_i > 0$. It is worth noting that the defect of $\bar\gamma_{i}$ may not be bounded uniformly with respect to $i$. Finally, it holds that
  \begin{align*}
    \distha(\bar\gamma_i,\bar\gamma)
    &= 
    \lim_{n \to \infty} \frac1n\disth
    \left( 
    \gamma_i(n), \gamma(n)
    \right) 
    = 
    \lim_{n \to \infty} \frac1n \disth 
    \left( 
    \left\lfloor\frac{n}{i}\right\rfloor\cdot\gamma(i), 
    \gamma(n) 
    \right)
    \\
    &\leq 
    \lim_{n \to \infty} 
    \left[
      \frac1n \disth 
      \left( 
      \gamma\left(i\lfloor\tfrac{n}{i}\rfloor\right), 
      \gamma(n) 
      \right) 
      +
      \frac{1}{n}\left\lfloor \frac{n}{i} \right\rfloor \cdot D_\phi \cdot \phi(i)
    \right]
    \\
    &\leq 
    \lim_{n \to \infty}
    \left[ 
      \frac1n 
      \max_{k=0,\ldots,i-1} \disth\left( \gamma(k), {\bm0} \right)
      + 
      \frac{1}{n}\phi(n) 
    \right]
    + 
    D_{\phi}\frac{\phi(i)}{i}
    = 
    D_{\phi}\frac{\phi(i)}{i}\stackrel{i\to\infty}{\too}0
  \end{align*}
\end{proof}
The difference between two distance functions $\dista$ and $\distha$
is very small: $\dista$ is defined on the dense subset of the domain
of definition of $\distha$ and they coincide whenever are both
defined. From now on we will not use the notation $\distha$.

\section{Grothendieck construction}
\label{se:grothendieck}
Given an Abelian monoid with a cancellation property, there is a
minimal Abelian group (called the Grothendieck Group of the
monoid), into which it isomorphically embeds. 
Similarly, an $\Rbb_{\geq0}$-semi-module naturally embeds into a
normed vector space. A nice example of this construction applied to
the semi-module of convex sets in $\Rbb^n$ (with the Minkowski sum and
the Hausdorff distance) can be found
in \cite{Raadstrom-Embedding-1952}.

\begin{proposition}{p:banach-embedding}
  Let $(\Gamma,+,\cdot,\disth)$ be a complete metric Abelian monoid
  with $\Rbb_{\geq0}$ action (an $\Rbb_{\geq0}$-semi-module) with
  homogeneous pseudo-metric $\disth$. Then there exists a Banach space
  $(\Bbf,||\,\cdot\,||)$ and a distance-preserving homomorphism \[
  f: \Gamma\to \Bbf \] such that the image of $f$ is a closed convex
  cone.
\end{proposition}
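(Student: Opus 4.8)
The plan is to carry out the Grothendieck construction, in the spirit of the embedding recalled at the beginning of this section. Since $\disth$ is homogeneous, Proposition~\ref{p:translation-invariance} guarantees that $\disth$ is translation invariant and that the cancellation property holds in $\Gamma$; these two facts are exactly what make the construction go through. First I would form the group $G$ of formal differences: on $\Gamma\times\Gamma$ declare $(\alpha,\beta)\sim(\alpha',\beta')$ whenever $\alpha+\beta'\aeq[\disth]\alpha'+\beta$, and let $G:=(\Gamma\times\Gamma)/\!\sim$, writing $\alpha-\beta$ for the class of $(\alpha,\beta)$. Addition $(\alpha-\beta)+(\alpha'-\beta'):=(\alpha+\alpha')-(\beta+\beta')$ is well defined because $+$ is $1$-Lipschitz, hence compatible with $\aeq[\disth]$, and cancellation handles the usual associativity and inverse bookkeeping, so $(G,+)$ is an Abelian group with neutral element $0-0$ and inverse $-(\alpha-\beta)=\beta-\alpha$.

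Next I would upgrade $G$ to a real vector space by extending the $\Rbb_{\geq0}$-action: for $\lambda\geq0$ set $\lambda\cdot(\alpha-\beta):=(\lambda\alpha)-(\lambda\beta)$, and for $\lambda<0$ set $\lambda\cdot(\alpha-\beta):=(|\lambda|\beta)-(|\lambda|\alpha)$. Well-definedness and the vector-space axioms follow directly from the doubly distributive semi-module axioms, all of which hold up to $\aeq[\disth]$. I would then define the candidate norm by
\[
  \|\alpha-\beta\|:=\disth(\alpha,\beta).
\]
The key point is that this is independent of the representative: if $\alpha+\beta'\aeq[\disth]\alpha'+\beta$, then, using translation invariance and substitution of $\disth$-equivalent points, $\disth(\alpha,\beta)=\disth(\alpha+\beta',\beta+\beta')=\disth(\alpha'+\beta,\beta+\beta')=\disth(\alpha',\beta')$. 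Positive definiteness is immediate, since $\|\alpha-\beta\|=0$ means $\alpha\aeq[\disth]\beta$, i.e. $\alpha-\beta=0$ in $G$; absolute homogeneity follows from the semi-module identity $\disth(\lambda\gamma,\lambda\gamma')=\lambda\,\disth(\gamma,\gamma')$ together with the sign convention above; and the triangle inequality reduces, after writing the sum of two differences over a common pair, to $\disth(\alpha+\alpha',\beta+\beta')\leq\disth(\alpha+\alpha',\beta+\alpha')+\disth(\beta+\alpha',\beta+\beta')=\disth(\alpha,\beta)+\disth(\alpha',\beta')$, again by translation invariance.

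Finally, let $f:\Gamma\to G$ be $\gamma\mapsto\gamma-0$. This is a monoid homomorphism that is $\Rbb_{\geq0}$-equivariant, and it is distance preserving because $\|f(\gamma)-f(\gamma')\|=\|\gamma-\gamma'\|=\disth(\gamma,\gamma')$. Its image $f(\Gamma)=\set{\gamma-0:\gamma\in\Gamma}$ is a convex cone: it is stable under the $\Rbb_{\geq0}$-action and under addition, since $\lambda f(\gamma)=f(\lambda\gamma)$ and $f(\gamma)+f(\gamma')=f(\gamma+\gamma')$, and convexity then follows from closure under nonnegative combinations. To obtain a Banach space I would let $(\Bbf,\|\cdot\|)$ be the completion of the normed space $(G,\|\cdot\|)$ and regard $f$ as a map into $\Bbf$.

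The main obstacle is the closedness of the image, and this is precisely where the completeness hypothesis enters. Because $f$ is an isometry of $(\Gamma,\disth)$ onto $f(\Gamma)$ and $(\Gamma,\disth)$ is complete, $f(\Gamma)$ is a complete metric subspace of $\Bbf$, and a complete subspace of a metric space is closed. Hence $f(\Gamma)$ is a closed convex cone in the Banach space $\Bbf$, as required. I would remark that passing to the completion does not disturb the cone: the convex-cone structure of $f(\Gamma)$ was already established inside $G\subseteq\Bbf$, and closedness is verified in $\Bbf$ by the completeness argument just given.
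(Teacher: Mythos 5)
Your proposal is correct and takes essentially the same route as the paper's proof: the Grothendieck construction on formal differences, the norm $\|\alpha-\beta\|:=\disth(\alpha,\beta)$ made well-defined via the translation invariance supplied by Proposition~\ref{p:translation-invariance}, the embedding $\gamma\mapsto\gamma-0$, closedness of the image from completeness of $\Gamma$ together with distance preservation, and the Banach space obtained as the completion of the normed space. The only cosmetic difference is your equivalence relation $\alpha+\beta'\aeq[\disth]\alpha'+\beta$ versus the paper's formulation with auxiliary elements $z,z'$, and these coincide here because of the cancellation property.
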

If $\dist$ is a proper pseudo-metric (not a metric), then
the map $f$ is not injective.

\begin{proof} By Lemma~\ref{p:translation-invariance} the pseudo-metric
$\deltabf$ is translation invariant. 
We can therefore apply the
Grothendieck construction to define a normed vector space $\Bbf_0$: Define
\[
  \Bbf_0
  :=
  \set{(x,y)\st x,y\in\Gamma}/\sim
\]
where $(x,y)\sim(x',y')$ if there are $z,z'\in\Gamma$, such that
$(x+z,y+z)\aeq[\dist](x'+z',y'+z')$.

Define also addition, multiplication by a scalar and a norm on $\Bbf_0$ by setting for
all $x,y,x',y'\in\Gamma$ and $\lambda\in\Rbb$
\begin{align*}
  (x,y)+(x',y')
  &:=
  (x+x',y+y')
  \\
  (-1)\cdot(x,y)
  &:=
  (y,x)
  \\
  \lambda\cdot(x,y)
  &:=
  \sign(\lambda)\cdot(|\lambda|\cdot x,|\lambda|\cdot y)
  \\
  ||(x,y)||
  &:=
  \deltabf(x,y)
\end{align*}

These operations respect the equivalence
relation and turn $(\Bbf_0,+,\cdot,||\,\cdot\,||)$ into a normed
vector-space.  The map $f$ defined by
\[
  f:\Gamma\to\Bbf_0,
  \quad
  x\mapsto (x,\mathbf0)
\]
is a well-defined distance-preserving homomorphism.

That $f(\Gamma)$ is closed immediately follows as $\Gamma$ is complete
and $f$ is distance-preserving.

In general, the space $\Bbf_0$ is not complete. We define the Banach
space $\Bbf$ as the completion of the normed vector space $\Bbf_0$.
\end{proof}

\section{Tropical probability spaces and their diagrams}
\label{se:tropical-diagrams}
\subsection{Diagrams of probability spaces.}
We will now briefly describe the construction of diagrams of
probability spaces, see \cite{Matveev-Asymptotic-2018} for a more
detailed discussion.  By a \term{finite probability space} we will mean a set
(not necessarily finite) with a probability measure, such that the
support of the measure is finite. 
For such probability space $X$ we denote by $|X|$ the cardinality of the
support of probability measure and the expression $x\in X$ will mean,
that $x$ is an \term{atom} in $X$, which is a point of positive weight
in the underlying set. 

We will consider commutative
diagrams of finite probability spaces, where arrows are equivalence
classes of measure-preserving maps. Two maps are considered equivalent
if they coincide on a set of full measure and such equivalence
  classes will be called \term{reductions}.

Three examples of diagrams of probability spaces are pictured
  in (\ref{eq:diagram-examples}).  The combinatorial structure of such
  a commutative diagram can be recorded by an object $\Gbf$, which
  could be equivalently considered as a special type of category, a
  finite poset, or a directed acyclic graph (DAG) with additional
  properties. We will call such objects simply \term{indexing
    categories}. Below we briefly recall the definition.

An \term{indexing category} is a finite category such that for any pair of
objects there exists at most one morphism between them in either
direction, and such that it satisfies the following property. For any pair of
objects $i,j$ in an indexing category $\Gbf$ there exists a \term{least
common ancestor}, i.e.~an object $k$ such that there are morphisms
$k\to i$ and $k\to j$ in $\Gbf$ and such that for any other object $l$
admitting morphisms $l\to i$ and $l\to j$, there is also a morphism
$l\to k$.

By $\size{\Gbf}$ we denote the number of objects in the indexing category, or
equivalently the number of vertices in the DAG or the number of points
in the poset $\Gbf$.  Important class of examples of  indexing categories
are so called \term{full categories} $\Lambdabf_{n}$, that correspond
to the poset of non-empty subsets of a set $\set{1,\ldots,n}$ ordered
by inclusion.  If $n=2$, we call the category
\[
\Lambdabf_{2}=(O_{1}\ot O_{\set{1,2}}\to O_{2})
\]
a fan.

The space of all commutative diagrams of a fixed combinatorial type
will be denoted $\prob\langle\Gbf\rangle$.  A morphism between two
diagrams $\Xcal,\Ycal\in\prob\langle\Gbf\rangle$ is defined to be the
collection of morphisms between corresponding individual spaces in
$\Xcal$ and $\Ycal$, that commute with morphisms within the diagrams
$\Xcal$ and $\Ycal$. 

The construction of forming commutative diagrams could be
iterated, producing diagrams of diagrams. Especially important will be
two-fans of $\Gbf$-diagrams, the space of which will be denoted
$\prob\<\Gbf\>\<\Lambdabf_{2}\>$. 

A two-fan $\Xcal$ will be called
\term{minimal}, if for any morphism of $\Xcal$ to another two-fan
$\Ycal$, the following holds: if the induced morphisms on the feet are
isomorphisms, then the top morphism is also an isomorphism. Any $\Gbf$-diagram will be called minimal if for any sub-diagram, which is a
two-fan, it contains a minimal two-fan with the same feet.

Given an $n$-tuple $(\Xsf_{1},\ldots,\Xsf_{n})$ of finite-valued random
variables, one can construct a minimal $\Lambdabf_{n}$-diagram
$\Xcal=\set{X_{I};\chi_{IJ}}$ by setting for
any $\emptyset\neq I\subset\set{1,\ldots,n}$
\[
  X_{I}=\prod_{i\in I} X_{i}
\]
where $X_{i}$ is the target space of random variable $\Xsf_{i}$, and
the probabilities are the induced distributions. For the diagram
constructed in such a way we will write
$\Xcal=\<\Xsf_{1},\ldots,\Xsf_{n}\>$.  On the other hand, any
$\Lambdabf_{n}$-diagram gives rise to the $n$-tuple of random
variables with the domain of definition being the initial space and
the targets being the terminal spaces. 

The tensor product $\Xcal\otimes\Ycal$ of two $\Gbf$-diagrams is defined
by taking the tensor product of corresponding probability spaces and
the Cartesian product of maps.

The special $\Gbf$-diagram in which all the spaces are
isomorphic to a single probability space $X$ will be denoted by
$X^{\Gbf}$.

For a diagram
$\Xcal\in\Prob\langle\Gbf\rangle$ one can evaluate entropies of the
individual spaces. The corresponding map will be denoted
\[
\ent_{*}:\prob\langle\Gbf\rangle\to\Rbb^{\Gbf}
\]
where the target space is the space of $\Rbb$-valued functions on
objects in $\Gbf$ and it is equipped with the $\ell^{1}$-norm.

For a two-fan $\Fcal=(\Xcal\ot\Zcal\to\Ycal)$ of $\Gbf$-diagrams
define the \term{entropy distance}
\[
  \kd(\Fcal)
  :=
  \|\ent_{*}\Zcal-\ent_{*}\Xcal\|_{1}
  +
  \|\ent_{*}\Zcal-\ent_{*}\Xcal\|_{1}
\]
We interpret $\kd(\Fcal)$ as a measure of deviation of $\Fcal$ from
being an isomorphism between the diagrams $\Xcal$ and $\Ycal$. Indeed,
$\kd(\Fcal)=0$ if and only if the two morphisms in $\Fcal$ are
isomorphisms.

We define the \term{intrinsic entropy distance} $\ikd$ on the space $\prob\langle\Gbf\rangle$
by
\[
  \ikd(\Xcal,\Ycal)
  :=
  \inf\set{\kd(\Fcal)\st \Fcal=
    (\Xcal\ot\Zcal\to\Ycal)\in
    \prob\langle \Gbf \rangle\langle \Lambdabf_{2} \rangle
  }
\]

The tensor product is 1-Lipschitz with respect to $\ikd$, thus
$(\prob\langle \Gbf \rangle,\otimes,\ikd)$ is a metric Abelian monoid
and $\ent_{*}:(\prob\langle \Gbf
\rangle,\otimes,\ikd)\to(\Rbb^{\Gbf},\|\cdot\|_{1})$ is a 1-Lipschitz
homomorphism. For proofs and more detailed discussion the reader is
referred to~\cite{Matveev-Asymptotic-2018}.

\subsection{Tropical diagrams}
Applying the construction of the previous section we obtain its
tropicalization -- a semi-module $(\prob[\Gbf],+,\,\cdot\,,\aikd)$.
The restriction of the asymptotic distance on the original monoid can
be defined independently as
\[
\aikd(\Xcal,\Ycal):=\lim_{n\to\infty}\frac1n\ikd(\Xcal^{n},\Ycal^{n})
\]

One of the main tools for the estimation of the (asymptotic) distance
is the so-called Slicing Lemma and its following consequence.
\begin{proposition}{p:slicingcorollary}
  Let $\Gbf$ be an indexing category, $\Xcal,\Ycal\in\prob\<\Gbf\>$ and
  $U\in\prob$.
  \begin{enumerate}
  \item
    \label{p:slicingreduction}      
    Let $\Xcal\to U$ be a reduction, then
    \begin{align*}
      \ikd(\Xcal,\Ycal) 
      &\leq 
      \int_{U}\ikd(\Xcal\rel u,\Ycal)\d p_{U}(u)+
      \size{\Gbf}\cdot\ent(U)
    \end{align*}
  \item
    \label{p:slicingcofan}
    For a ``co-fan'' $\Xcal\to U\ot\Ycal$ holds
    \[
    \ikd(\Xcal,\Ycal)
    \leq
    \int_{U}\ikd(\Xcal\rel u,\Ycal\rel u)\d p_{U}(u)
    \]
  \end{enumerate}
\end{proposition}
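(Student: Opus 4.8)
The plan is to prove the co-fan bound in part~(\ref{p:slicingcofan}) directly, by a fiberwise construction of two-fans over the common base $U$ followed by mixing, and then to reduce part~(\ref{p:slicingreduction}) to it at the cost of a single entropy term. Throughout I would use that $U$ has finite support, so the families indexed by atoms $u\in U$ are finite and each ``integral'' $\int_{U}\cdots\d p_{U}(u)$ is a finite weighted sum. Writing $\mix_{U}$ for the operation that glues a $U$-indexed family of $\Gbf$-diagrams into a single $\Gbf$-diagram carrying a reduction to $U$, a diagram that reduces to $U$ is recovered from its slices, $\Xcal\cong\mix_{U}\set{\Xcal\rel u}$, and likewise $\Ycal\cong\mix_{U}\set{\Ycal\rel u}$ in the co-fan situation; this fiberwise-mixing mechanism is precisely what the Slicing Lemma supplies.

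For part~(\ref{p:slicingcofan}), fix $\epsilon>0$ and for each atom $u$ choose a two-fan $\Fcal_{u}=(\Xcal\rel u\ot\Zcal_{u}\to\Ycal\rel u)$ with $\kd(\Fcal_{u})\leq\ikd(\Xcal\rel u,\Ycal\rel u)+\epsilon$. Mixing these over $U$ produces $\Zcal:=\mix_{U}\set{\Zcal_{u}}$ together with reductions $\Xcal\ot\Zcal\to\Ycal$ obtained by mixing the legs of the $\Fcal_{u}$; commutativity holds because it holds fiberwise. The decisive computation is that for disjointly supported slices the entropy of a mixture splits at every node $i\in\Gbf$ as $\ent\big((\mix_{U}\set{\Wcal_{u}})_{i}\big)=\int_{U}\ent(W_{u,i})\d p_{U}(u)+\ent(U)$. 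Hence the common summand $\ent(U)$ cancels in each difference $\ent_{*}\Zcal-\ent_{*}\Xcal$ and $\ent_{*}\Zcal-\ent_{*}\Ycal$, and pulling the remaining averages out of the $\ell^{1}$-norm by the triangle inequality gives $\kd(\Fcal)\leq\int_{U}\kd(\Fcal_{u})\d p_{U}(u)$. Since $\Fcal$ is a two-fan from $\Xcal$ to $\Ycal$, we obtain $\ikd(\Xcal,\Ycal)\leq\kd(\Fcal)\leq\int_{U}\ikd(\Xcal\rel u,\Ycal\rel u)\d p_{U}(u)+\epsilon$, and letting $\epsilon\to0$ yields the claimed bound without any entropy term.

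For part~(\ref{p:slicingreduction}), set $\Ycal':=\Ycal\otimes U^{\Gbf}$. This diagram reduces to $U$, and conditioning on an atom collapses the second factor, so $\Ycal'\rel u\cong\Ycal$ for every $u$. Since $\Xcal$ also reduces to $U$, part~(\ref{p:slicingcofan}) applies to the co-fan $\Xcal\to U\ot\Ycal'$ and gives $\ikd(\Xcal,\Ycal')\leq\int_{U}\ikd(\Xcal\rel u,\Ycal)\d p_{U}(u)$. It remains to discard the auxiliary factor: the two-fan with apex $\Ycal\otimes U^{\Gbf}$, the identity onto $\Ycal\otimes U^{\Gbf}$, and the coordinate projection onto $\Ycal$ has entropy distance $\sum_{i\in\Gbf}\big|\ent(Y_{i}\times U)-\ent(Y_{i})\big|=\size{\Gbf}\cdot\ent(U)$, whence $\ikd(\Ycal\otimes U^{\Gbf},\Ycal)\leq\size{\Gbf}\cdot\ent(U)$. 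The triangle inequality for $\ikd$ then combines the two estimates into $\ikd(\Xcal,\Ycal)\leq\int_{U}\ikd(\Xcal\rel u,\Ycal)\d p_{U}(u)+\size{\Gbf}\cdot\ent(U)$.

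The hard part will be the precise entropy accounting, and this is where the Slicing Lemma does its real work: one must check that the $\ent(U)$ contribution is genuinely identical at every node of $\Xcal$, $\Ycal$ and $\Zcal$, so that it cancels completely in part~(\ref{p:slicingcofan}) yet reappears, as exactly $\size{\Gbf}\cdot\ent(U)$, in part~(\ref{p:slicingreduction}) precisely because there the target $\Ycal$ is not itself a mixture over $U$. Getting this cancellation-versus-survival dichotomy right is the crux. By comparison, the remaining steps—that $\mix_{U}$ of commuting reductions is again a commuting family of reductions, and the disjoint-support entropy identity itself—are routine bookkeeping; once the accounting is in place, both inequalities drop out as above.
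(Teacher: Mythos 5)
The paper never proves this proposition internally: it states the result and defers to \cite{Matveev-Asymptotic-2018} (``The statements and the proofs of the Slicing Lemma and its consequences can be found in...''), so there is no in-paper proof to compare against, and your argument must be judged as a self-contained alternative. As such it appears correct, and it takes a genuinely different route from the cited source, which derives both inequalities from a more general Slicing Lemma estimating $\kd$ of a two-fan sliced along a reduction of its top vertex. You instead prove part~(\ref{p:slicingcofan}) directly: choose fiberwise $\epsilon$-optimal fans $\Fcal_u=(\Xcal\rel u\ot\Zcal_u\to\Ycal\rel u)$, mix them over $U$, and use the mixture entropy formula $\ent_*\big(\bigoplus_{\theta}\Xcal_\theta\big)=\int_\Theta\ent_*(\Xcal_\theta)\d p(\theta)+\ent_*(\Theta^{\Gbf})$ so that the $\ent(U)$ term cancels node-by-node in $\ent_*\Zcal-\ent_*\Xcal$ and $\ent_*\Zcal-\ent_*\Ycal$; part~(\ref{p:slicingreduction}) then follows by comparing $\Ycal$ with $\Ycal\otimes U^{\Gbf}$ at a cost of exactly $\size{\Gbf}\cdot\ent(U)$ and applying the triangle inequality for $\ikd$. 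Every ingredient you invoke --- existence and uniqueness of mixtures (hence $\Xcal\cong\mix\set{\Xcal\rel u}$ for any diagram reducing to $U$), the entropy formula, functoriality of mixing, and the triangle inequality for $\ikd$ --- is available in this paper or in \cite{Matveev-Asymptotic-2018}, and there is no circularity: the only result here whose proof uses Proposition~\ref{p:slicingcorollary} is Lemma~\ref{p:mixtures-dist1}, which you never touch, even though your proof borrows the mixture machinery that the paper only introduces after stating the proposition. Two minor corrections: the recovery $\Xcal\cong\mix\set{\Xcal\rel u}$ is supplied by the uniqueness clause of the mixture construction, not by the Slicing Lemma as you parenthetically claim; and in part~(\ref{p:slicingreduction}) you should note that the left leg of your comparison fan is the identity, which is a legitimate reduction, so $\ikd(\Ycal\otimes U^{\Gbf},\Ycal)\leq\size{\Gbf}\cdot\ent(U)$ indeed follows from a single explicit two-fan. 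What the two approaches buy: the companion paper's Slicing Lemma is a reusable general-purpose tool, while your argument is shorter, stays entirely within the mixture formalism of the present paper, and makes transparent exactly why the entropy term cancels in the co-fan case but survives as $\size{\Gbf}\cdot\ent(U)$ in the reduction case.
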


The statements and the proofs of the Slicing Lemma and its consequences can
be found in~\cite{Matveev-Asymptotic-2018}.

We will show below that $(\prob\langle \Gbf \rangle,\otimes,\aikd)$ has the uniformly bounded and vanishing defect properties. For this purpose we
need to develop some technical tools.

\subsection{Mixtures}
The input data for the mixture operation is
a family of $\Gbf$-diagrams, parameterized by a probability space. As
a result one obtains another $\Gbf$-diagram with pre-specified
conditionals. One particular instance of a mixture is when one mixes
two diagrams $\Xcal$ and $\set{\bullet}^{\Gbf}$, the latter being a
constant $\Gbf$-diagram of one-point probability spaces. This
operation will be used as a substitute for taking radicals
``$\Xcal^{\frac1n}$'' below.

\subsubsection{Definition of mixtures}
Let $\Gbf$ be an indexing category and $\Theta$ be a
probability space. By $\Theta^{\Gbf}$ we denote the
  \term{constant $\Gbf$-diagram} -- the diagram such that all
    spaces in it are $\Theta$ and all morphisms are identity
    morphisms. Let $\set{\Xcal_{\theta}}_{\theta\in\un\Theta}$ be
a family of $\Gbf$-diagrams parameterized by $\Theta$. The
\term{mixture} of the family $\set{\Xcal_{\theta}}$ is the reduction
\[      
  \mix\set{\Xcal_{\theta}}=
  \left(
    \Ycal
    \too
    \Theta^{\Gbf}
  \right)
\]      
such that
\[\tageq{mixture}
  \Ycal\rel\theta\cong\Xcal_{\theta}
  \quad\text{for any $\theta\in\Theta$}
\]

The mixture exists and is uniquely defined by
property~(\ref{eq:mixture}) up to an isomorphism which is identity on
$\Theta^{\Gbf}$.

We denote the top diagram of the mixture by
\[
  \Ycal=:\bigoplus_{\theta\in\Theta}\Xcal_{\theta}
\]
and also call it the mixture of the family $\set{\Xcal_{\theta}}$.

When 
\[
  \Theta=\Lambda_{\alpha}
  :=
  \big(\set{\square,\blacksquare};
  p(\blacksquare)=\alpha
  \big) 
\]
is a binary space we write simply
\[
  \Xcal_{\blacksquare}\oplus_{\Lambda_{\alpha}}\Xcal_{\square}
\]
for the mixture. The diagram subindexed by the $\blacksquare$
will always be the first summand.

The entropy of the mixture can be evaluated by the following formula
\[
  \ent_{*}\left(\bigoplus_{\theta\in\Theta}\Xcal_{\theta}\right)
  =
  \int_{\Theta}\ent_{*}(\Xcal_{\theta})\d
  p(\theta) + \ent_{*}(\Theta^{\Gbf})
\]
Mixtures satisfy the distributive law with respect to the tensor
product
\begin{align*}
  \mix(\set{\Xcal_{\theta}}_{\theta\in\Theta})
  \otimes
  \mix(\set{\Ycal_{\theta'}}_{\theta'\in\Theta'})
  &\cong
  \mix(\set{\Xcal_{\theta}\otimes\Ycal_{\theta'}}
  _{(\theta,\theta')\in\Theta\otimes\Theta'})
  \\
  \left(\bigoplus_{\theta\in\Theta}\Xcal_{\theta}\right)
  \otimes
  \left(\bigoplus_{\theta'\in\Theta'}\Ycal_{\theta'}\right)
  &\cong
  \bigoplus_{(\theta,\theta')\in\Theta\otimes\Theta'}
      (\Xcal_{\theta}\otimes\Ycal_{\theta'})
\end{align*}
  
\subsubsection{The distance estimates for the mixtures.}
Recall that for a diagram category $\Gbf$ we denote by
$\set{\bullet}=\set{\bullet}^{\Gbf}$ the constant $\Gbf$-diagram of
one-point spaces.
  
The mixture of a $\Gbf$-diagram with $\set{\bullet}^{\Gbf}$ may serve
as an substitute of taking radicals of the diagram.  The following lemma
provides a justification of this by some distance estimates related to
mixtures and will be used below.

\begin{lemma}{p:mixtures-dist1}
  Let $\Gbf$ be a complete diagram category and
  $\Xcal,\Ycal\in\prob\<\Gbf\>$. Then
  \begin{enumerate}
  \item
    $\displaystyle
    \aikd(\Xcal,\Xcal^{n}\oplus_{\Lambda_{1/n}}\set{\bullet})
    \leq
    \ent(\Lambda_{1/n})
    $
  \item
    $\displaystyle
    \aikd\big(\Xcal,(\Xcal\oplus_{\Lambda_{1/n}}\set{\bullet})^{n}\big)
    \leq
    n\cdot\ent(\Lambda_{1/n})
    $
  \item
    $\displaystyle
    \aikd\big(
    (\Xcal\otimes\Ycal)\oplus_{\Lambda_{1/n}}\set{\bullet},
    (\Xcal\oplus_{\Lambda_{1/n}}\set{\bullet})
    \otimes
    (\Ycal\oplus_{\Lambda_{1/n}}\set{\bullet})
    \big)
    \leq
    3\ent(\Lambda_{1/n})
    $
  \item
    $\displaystyle
    \aikd\big((\Xcal\oplus_{\Lambda_{1/n}}\set{\bullet}),
    (\Ycal\oplus_{\Lambda_{1/n}}\set{\bullet})\big)
    \leq
    \frac1n\aikd(\Xcal,\Ycal)
    $
  \end{enumerate}
\end{lemma}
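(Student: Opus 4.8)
The plan is to prove all four estimates by a single mechanism: unfold each tensor power of a binary mixture into one large mixture, apply the Slicing Lemma (Proposition~\ref{p:slicingcorollary}) on the resulting parameter space, and then let binomial fluctuations wash out in the asymptotic limit. The starting point is the distributive law for mixtures over tensor products, which turns the $k$-th power of a binary mixture into a mixture indexed by the product space $\Lambda_{1/n}^{\otimes k}$. For instance
\[
  \big(\Xcal\oplus_{\Lambda_{1/n}}\set{\bullet}\big)^{k}
  \cong
  \bigoplus_{s\in\Lambda_{1/n}^{\otimes k}}\Xcal^{|s|},
\]
where $|s|$ is the number of $\blacksquare$-coordinates of the string $s$, so the conditional over $s$ is the power $\Xcal^{|s|}$ and $|s|$ is distributed as $\mathrm{Binomial}(k,1/n)$. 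The same unfolding gives $\Xcal^{n}\oplus_{\Lambda_{1/n}}\set{\bullet}$ the conditional $\Xcal^{n|s|}$, and presents the right-hand side of the third estimate as a mixture over $\Lambda_{1/n}^{\otimes 2k}$ carrying \emph{two} independent label families.

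For the fourth estimate, both $(\Xcal\oplus_{\Lambda_{1/n}}\set{\bullet})^{k}$ and $(\Ycal\oplus_{\Lambda_{1/n}}\set{\bullet})^{k}$ reduce to the \emph{same} parameter space $U=\Lambda_{1/n}^{\otimes k}$, so the co-fan estimate~(\ref{p:slicingcofan}) applies with no entropy overhead:
\[
  \ikd\big((\Xcal\oplus_{\Lambda_{1/n}}\set{\bullet})^{k},(\Ycal\oplus_{\Lambda_{1/n}}\set{\bullet})^{k}\big)
  \leq
  \int_{U}\ikd\big(\Xcal^{|s|},\Ycal^{|s|}\big)\,\d p_{U}(s).
\]
Setting $g(j):=\tfrac1j\ikd(\Xcal^{j},\Ycal^{j})$, subadditivity of $j\mapsto\ikd(\Xcal^{j},\Ycal^{j})$ (Proposition~\ref{p:mam-translations}) shows $g(j)\leq\ikd(\Xcal,\Ycal)$ and $g(j)\to\aikd(\Xcal,\Ycal)$. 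I would then divide by $k$ and let $k\to\infty$: since $\tfrac1k\Ebb|s|=\tfrac1n$ while $|s|\to\infty$ on the typical event, a truncation at a fixed level $J_{\epsilon}$ beyond which $g(j)$ is within $\epsilon$ of $\aikd(\Xcal,\Ycal)$ gives the limit $\tfrac1n\aikd(\Xcal,\Ycal)$, which is the fourth claim.

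The first three estimates compare a mixture against a diagram that does \emph{not} carry the full label structure, so the parameter entropy no longer cancels. Here I would use the reduction form~(\ref{p:slicingreduction}) for the genuinely unmatched labels, which charges a term proportional to $\ent(U)=k\,\ent(\Lambda_{1/n})$, together with the fluctuation estimate
\[
  \int_{U}\ikd\big(\Xcal^{n|s|},\Xcal^{k}\big)\,\d p_{U}(s)
  \leq
  \ikd(\Xcal,\set{\bullet})\int_{U}\big|n|s|-k\big|\,\d p_{U}(s)
  =O(\sqrt{nk}),
\]
which follows from $\mathrm{Var}(|s|)\leq k/n$. After dividing by $k$ the $O(\sqrt{nk})$ fluctuation vanishes and only the label-entropy term survives, yielding bounds of the form $\mathrm{const}\cdot\ent(\Lambda_{1/n})$; the constants $1$, $n$, $3$ are read off as the number of label families involved in the surgery (one label in the first estimate, $n$ labels after unfolding the $n$-th power in the second, and the discrepancy between one joint label and two independent labels in the third).

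The main obstacle I anticipate is exactly the bookkeeping of this entropy overhead. One must route each comparison so that the overhead-free co-fan estimate~(\ref{p:slicingcofan}) carries as much of the shared parameter space as possible, and only the truly unmatched labels are charged to the reduction estimate~(\ref{p:slicingreduction}); getting the sharp constants $1$, $n$, $3$ rather than a crude multiple of $\ent(U)$ depends on organizing the surgery this way. The second technical point, controlling the binomial fluctuations uniformly in $k$ so that they disappear after normalization by $k$, is handled by the variance bound together with the subadditive convergence $g(j)\to\aikd$ used above.
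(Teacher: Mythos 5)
Your proposal is correct and follows essentially the same route as the paper: unfold tensor powers of mixtures via the distributive law, apply the Slicing Lemma to the resulting label space, and let the binomial fluctuations (controlled by the variance bound) vanish after normalization. This is exactly the paper's proof of the first estimate, the second is declared ``similar'' there, and your co-fan argument for the fourth estimate --- including the truncation at a level $J_{\epsilon}$ to upgrade the conclusion from $\tfrac1n\ikd(\Xcal,\Ycal)$ to $\tfrac1n\aikd(\Xcal,\Ycal)$ --- supplies a limit argument that the paper compresses into the single sentence ``slice both arguments along $\Lambda_{1/n}$.'' The one place where your route genuinely differs is the third estimate: you get the constant $3$ by slicing both sides directly (one joint label family on the left versus two independent label families on the right, $1+2=3$), whereas the paper deduces it from the second estimate via the triangle inequality and the $1$-Lipschitz property of the tensor product. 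The two bookkeepings are equivalent, and your direct version is arguably cleaner: the paper's displayed chain for this item only parses correctly after re-inserting $n$-th powers and invoking homogeneity of $\aikd$, so a self-contained slicing argument is a reasonable alternative. One caveat you share with the paper: Proposition~\ref{p:slicingcorollary}(\ref{p:slicingreduction}) as stated charges $\size{\Gbf}\cdot\ent(U)$, not $\ent(U)$, so strictly both your argument and the paper's yield constants inflated by $\size{\Gbf}$ unless a version of the slicing estimate without that factor is used; since $\Gbf$ is fixed, this changes nothing downstream, but the constants $1$, $n$, $3$ as stated depend on suppressing it.
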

Note that the distance estimates in the lemma above are with respect
to the asymptotic distance. This is essential, since from
the perspective of the intrinsic distance mixtures are very
badly behaved.
\begin{proof}
  For $\lambda \in \Lambda_{1/n}^N$, define $\emp(\lambda)$ to be the number of black squares in the sequence $\lambda$.
  It is a binomially
  distributed random variable with mean $N/n$ and variance
  $\frac{N}{n}(1-\frac1n)$.
  
  The first claim is then proven by the following calculation
  \begin{align*}
    \aikd(\Xcal&,\Xcal^{n}\oplus_{\Lambda_{1/n}}\set{\bullet})
    =
    \lim_{N\to\infty}
      \frac1N
        \ikd\left(
          \Xcal^{N},
          (\Xcal^{n}\oplus_{\Lambda_{1/n}}\set{\bullet})^{N}
        \right)
    \\    
    &=
    \lim_{N\to\infty}
      \frac1N
        \ikd\left(
          \Xcal^{N},
          \bigoplus_{\lambda\in\Lambda_{1/n}^{N}}
              \Xcal^{n\cdot \emp(\lambda)}
        \right)
    \\
    &\leq
    \ent(\Lambda_{1/n}) +
    \lim_{N\to\infty}
      \frac1N
      \int_{\lambda\in\Lambda^{n}_{1/n}}
        \ikd(\Xcal^{N},
             \Xcal^{n\cdot \emp(\lambda)})
      \d p(\lambda)
    \\
    &\leq
    \ent(\Lambda_{1/n}) +
    \|\ent_{*}(\Xcal)\|_{1}\cdot
    \lim_{N\to\infty}
      \frac{n}{N}
      \cdot
      \int_{\lambda\in\Lambda_{1/n}^{N}}
         \big|N/n- \emp(\lambda)\big|
         \d p(\lambda)
    \\
    &\leq
    \ent(\Lambda_{1/n}) +
    \|\ent_{*}(\Xcal)\|_{1}\cdot
    \lim_{N\to\infty}
      \frac{n}{N}\cdot\sqrt{N\cdot\frac1n(1-\frac1n)}
    =
    \ent(\Lambda_{1/n})
  \end{align*}
   where we used Proposition~\ref{p:slicingcorollary}(\ref{p:slicingreduction}) for the
  inequality on the third line above.

  The second claim is proven similarly and the third follows from the
  second and the $1$-Lipschitz property of the tensor
  product:
  \begin{align*}
  &\aikd\big(
  (\Xcal\otimes\Ycal)\oplus_{\Lambda_{1/n}}\set{\bullet},
  (\Xcal\oplus_{\Lambda_{1/n}}\set{\bullet})
  \otimes
  (\Ycal\oplus_{\Lambda_{1/n}}\set{\bullet})
  \big)
   \\
  &\leq \aikd\big(
  (\Xcal\otimes\Ycal)\oplus_{\Lambda_{1/n}}\set{\bullet},
  \Xcal
  \otimes
  \Ycal
  \big) + 2 \ent(\Lambda_{1/n}) \\
  &\leq
  3\ent(\Lambda_{1/n})
  \end{align*}
  Finally, the fourth follows from
  Proposition~\ref{p:slicingcorollary}(\ref{p:slicingcofan}), by slicing
  both arguments along $\Lambda_{1/n}$.
\end{proof}

\subsection{Vanishing defect property and completeness of the
  tropical cone}
\begin{lemma}{p:unifsmalldefectaikd}
  For every admissible function $\phi$, every
  $\bar\Xcal\in\qlin_{\phi}(\prob\<\Gbf\>,\aikd)$ and every $k \in
  \Nbb$, there exists an asymptotically equivalent sequence
  $\bar\Ycal$ with defect bounded by the admissible function $\phi_k$ defined by
  \[
    \phi_k(s) := 3\ent(\Lambda_{1/k}) + \frac{1}{k} \phi(k \cdot s)
  \] 
\end{lemma}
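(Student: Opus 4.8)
The plan is to construct $\bar\Ycal$ as an approximate $k$-th root of the dilated sequence $n\mapsto\Xcal(k\cdot n)$, using mixture against $\Lambda_{1/k}$ as a stand-in for extracting radicals. Concretely, I would put $\Ycal(0):=\set{\bullet}$ and, for $n\geq1$,
\[
  \Ycal(n):=\Xcal(k\cdot n)\oplus_{\Lambda_{1/k}}\set{\bullet}.
\]
The heuristic behind this choice is that quasi-linearity makes $\Xcal(k\cdot n)$ asymptotically close to the tensor power $\Xcal(n)^{k}$, while mixing once against $\Lambda_{1/k}$ inverts such a power up to the small error $\ent(\Lambda_{1/k})$; this is exactly the first estimate of Lemma~\ref{p:mixtures-dist1}. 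The entire argument then rests on two tools already available: the quasi-homogeneity of $\bar\Xcal$ (Corollary~\ref{p:psi-homo}) and the four mixture distance estimates of Lemma~\ref{p:mixtures-dist1}.

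To check that $\bar\Ycal$ is asymptotically equivalent to $\bar\Xcal$, I would bound, for each fixed $n$, through the intermediate diagram $\Xcal(n)^{k}\oplus_{\Lambda_{1/k}}\set{\bullet}$,
\[
  \aikd\big(\Xcal(n),\Ycal(n)\big)
  \leq
  \ent(\Lambda_{1/k})+\tfrac1k\,\aikd\big(\Xcal(n)^{k},\Xcal(k\cdot n)\big)
  \leq
  \ent(\Lambda_{1/k})+D_{\phi}\cdot\phi(n),
\]
where the first inequality combines the first and the fourth estimates of Lemma~\ref{p:mixtures-dist1} (the latter supplying the factor $\tfrac1k$), and the second uses $\Xcal(n)^{k}=k\cdot\Xcal(n)$ together with Corollary~\ref{p:psi-homo}. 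Dividing by $n$ and letting $n\to\infty$, the sublinearity of $\phi$ forces the asymptotic distance $\lim_{n\to\infty}\tfrac1n\aikd(\Xcal(n),\Ycal(n))$ to vanish, so $\bar\Ycal$ and $\bar\Xcal$ are asymptotically equivalent.

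For the defect bound I would expand $\Ycal(m+n)$ and $\Ycal(m)\otimes\Ycal(n)$ and interpose the diagram $(\Xcal(k\cdot m)\otimes\Xcal(k\cdot n))\oplus_{\Lambda_{1/k}}\set{\bullet}$. The fourth (contraction) estimate of Lemma~\ref{p:mixtures-dist1} followed by the $\phi$-bounded defect of $\bar\Xcal$ applied to the arguments $k\cdot m$ and $k\cdot n$ bounds the resulting ``outer'' difference by $\tfrac1k\,\aikd(\Xcal(k(m+n)),\Xcal(k\cdot m)\otimes\Xcal(k\cdot n))\leq\tfrac1k\,\phi(k(m+n))$, while the third estimate of Lemma~\ref{p:mixtures-dist1} (compatibility of mixing with $\otimes$) bounds the remaining difference by $3\,\ent(\Lambda_{1/k})$. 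Adding the two contributions yields exactly
\[
  \aikd\big(\Ycal(m+n),\Ycal(m)\otimes\Ycal(n)\big)
  \leq
  \tfrac1k\,\phi\big(k(m+n)\big)+3\,\ent(\Lambda_{1/k})
  =
  \phi_k(m+n).
\]
Admissibility of $\phi_k$ is routine: $t\mapsto\tfrac1k\phi(k\,t)$ is admissible (the change of variables $u=k\,t$ reduces its integral bound to that of $\phi$), and adding the constant $3\,\ent(\Lambda_{1/k})$ preserves admissibility.

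The main obstacle is conceptual rather than computational: one has to recognize that mixing against $\Lambda_{1/k}$ is the right ``radical'' operation, simultaneously keeping $\bar\Ycal$ asymptotically close to $\bar\Xcal$ and contracting the defect by the factor $\tfrac1k$ at the cost of only the small additive constant $3\,\ent(\Lambda_{1/k})$. Once the construction is fixed, everything reduces to the cited estimates. The one bookkeeping subtlety is that membership $\bar\Xcal\in\qlin_\phi$ only guarantees defect bounded by $C\phi$ for some $C>0$; carrying this $C$ through replaces the first summand above by $\tfrac{C}{k}\phi(k(m+n))$, so that $\bar\Ycal$ has defect bounded by $\max(C,1)\cdot\phi_k$ and in particular lies in $\qlin_{\phi_k}$.
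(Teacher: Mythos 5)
Your proposal is correct and follows essentially the same route as the paper: the identical construction $\Ycal(n)=\Xcal(k\cdot n)\oplus_{\Lambda_{1/k}}\set{\bullet}$, asymptotic equivalence via estimates (1) and (4) of Lemma~\ref{p:mixtures-dist1} together with Corollary~\ref{p:psi-homo}, and the defect bound via estimates (4) and (3) through the intermediate diagram $(\Xcal(k\cdot m)\otimes\Xcal(k\cdot n))\oplus_{\Lambda_{1/k}}\set{\bullet}$. Your closing remarks on admissibility of $\phi_k$ and on carrying the constant $C$ are sound bookkeeping points that the paper defers to Corollary~\ref{p:vanishingdefect}.
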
 
\begin{proof}
  Let $\bar\Xcal=\set{\Xcal(i)}$ be a quasi-linear sequence with
  defect bounded by $\phi$ and let $k \in \Nbb$.
  
  Define a new sequence $\bar\Ycal=\set{\Ycal(i)}$ by
  \[
    \Ycal(i)
    :=
    \big(\Xcal(k\cdot i)\big)
    \oplus_{\Lambda_{1/k}}
    \set{\bullet}
  \]
  First we verify
  that the sequences $\bar\Xcal$ and $\bar\Ycal$ are asymptotically
  equivalent, that is
  \begin{align*}
    \hat\aikd(\bar\Xcal,\bar\Ycal)
    &:=
    \lim_{i\to\infty}
    \frac{1}{i}
    \aikd\left(\Xcal(i), 
    \Ycal(i)
    \right)
    =
    0
  \end{align*}
  We estimate the asymptotic distance between individual members of
  sequences $\bar\Xcal$ and $\bar\Ycal$ using
  Lemma~\ref{p:mixtures-dist1} and Corollary~\ref{p:psi-homo} as
  follows
  \begin{align*}
    \aikd(&\Xcal(i), 
    \Ycal(i)
    )
    =
    \aikd\big(\Xcal(i), 
    \Xcal(k\cdot i)
    \oplus_{\Lambda_{1/k}}\set{\bullet}
    \big)    
    \\
    &\leq
    \aikd\left(\Xcal(i), 
    \Xcal(i)^{k}
    \oplus_{\Lambda_{1/k}}\set{\bullet}
    \right)
    +    
    \aikd\left(\Xcal(i)^{k}
    \oplus_{\Lambda_{1/k}}\set{\bullet}, 
    \Xcal(k\cdot i)
    \oplus_{\Lambda_{1/k}}\set{\bullet}
    \right)    
    \\
    &\leq
    \ent(\Lambda_{1/k})
    +    
    D_{\phi}\cdot \phi(i)
  \end{align*}
  Thus $\hat\aikd(\bar\Xcal,\bar\Ycal)=0$ and the two sequences are
  asymptotically equivalent.  Next we show that the sequence
  $\bar\Ycal$ is $\aikd$-quasi-linear and evaluate its defect, also using
  Lemma~\ref{p:mixtures-dist1}. Let $i,j\in\Nbb$, then
  \begin{align*}
    \aikd&\big(\Ycal(i+j),\Ycal(i)\otimes\Ycal(j)\big)
    \\
    &=
    \aikd
    \Big(
    \Xcal(k\cdot i+k\cdot j)
    \oplus_{\Lambda_{1/k}}\set{\bullet},
    \big(
      \Xcal(k\cdot i)
      \oplus_{\Lambda_{1/k}}\set{\bullet}
    \big)
    \otimes
    \big(
      \Xcal(k\cdot j)
      \oplus_{\Lambda_{1/k}}\set{\bullet}
    \big)
    \Big)    
    \\
    &\leq
    \aikd\Big(\big(\Xcal(k\cdot i)\!\otimes\!\Xcal(k\cdot j)\big)
    \!\oplus_{\Lambda_{1/k}}\!\set{\bullet},     
    \big(
      \Xcal(k\cdot i)
      \oplus_{\Lambda_{1/k}}\!\set{\bullet}
      \big)
    \!\otimes\!
    \big(
      \Xcal(k\cdot j)
      \oplus_{\Lambda_{1/k}}\!\set{\bullet}
      \big)
    \Big)    
    \\
    &\quad+
    \frac{1}{k}\phi\big(k \cdot (i + j )\big)
    \\
    &\leq
    3\ent(\Lambda_{1/k})+ \frac{1}{k} \phi\big(k \cdot( i + j)\big)
  \end{align*}
\end{proof}

\begin{corollary}{p:vanishingdefect}
  For any indexing category $\Gbf$ and for the admissible function
  $\phi$ given by $\phi(t) = t^{\alpha}$, $\alpha \in [0, 1)$,
    $\qlin_{\phi}(\prob\<\Gbf\>,\aikd)$ has the uniformly bounded and
    vanishing defect properties.
\end{corollary}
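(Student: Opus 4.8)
The plan is to read off both properties directly from Lemma~\ref{p:unifsmalldefectaikd}, which already does the substantive work: for each $k\in\Nbb$ it produces, from a given $\phi$-quasi-linear sequence, an asymptotically equivalent one whose defect is controlled by the admissible function $\phi_k(s)=3\ent(\Lambda_{1/k})+\frac1k\phi(k\cdot s)$. Specializing to $\phi(t)=t^\alpha$ gives the explicit one-parameter family
\[
  \phi_k(s)=3\ent(\Lambda_{1/k})+k^{\alpha-1}s^\alpha,
\]
and the corollary then reduces to two elementary estimates on this family: one with $k$ fixed, and one with $k\to\infty$.

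For the uniformly bounded defect property I would fix a single value, say $k=2$. Since $s^\alpha\geq1$ for all $s\geq1$, the constant term is absorbed into the power, giving $\phi_2(s)\leq\big(3\ent(\Lambda_{1/2})+2^{\alpha-1}\big)s^\alpha=C\cdot\phi(s)$, where $C$ depends only on $\alpha$. As Lemma~\ref{p:unifsmalldefectaikd} applies to every sequence with this same fixed $k$, the constant $C$ is uniform over all sequences, which is exactly what the definition requires.

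For the vanishing defect property I would instead let $k$ grow. The key computation is
\[
  \int_1^\infty\frac{\phi_k(t)}{t^2}\,\d t
  =3\ent(\Lambda_{1/k})\int_1^\infty\frac{\d t}{t^2}
   +k^{\alpha-1}\int_1^\infty t^{\alpha-2}\,\d t
  =3\ent(\Lambda_{1/k})+\frac{k^{\alpha-1}}{1-\alpha},
\]
the second integral being finite precisely because $\alpha<1$. Both summands vanish as $k\to\infty$: the power term because $\alpha-1<0$, and the entropy term because $\ent(\Lambda_{1/k})$ is the binary entropy $\tfrac1k\log k-(1-\tfrac1k)\log(1-\tfrac1k)$, which tends to $0$. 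Hence, given $\epsilon>0$, choosing $k$ large enough and setting $\psi:=\phi_k$ furnishes an asymptotically equivalent sequence with defect bounded by an admissible $\psi$ satisfying $\int_1^\infty\psi(t)/t^2\,\d t<\epsilon$.

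There is no serious obstacle here: all the conceptual content has been packaged into Lemma~\ref{p:unifsmalldefectaikd}, and what remains are the two integral estimates above together with the fact that $\ent(\Lambda_{1/k})\to0$, which is the only point deserving even momentary attention.
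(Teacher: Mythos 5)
Your overall route is the same as the paper's: both proofs specialize Lemma~\ref{p:unifsmalldefectaikd} to $\phi(t)=t^\alpha$ and read the two properties off the resulting family $\phi_k$, and your vanishing-defect computation (let $k\to\infty$ and estimate $\int_1^\infty \phi_k(t)\,t^{-2}\,\d t$) matches the paper essentially verbatim. The gap is in the uniformly-bounded-defect half. You take the statement of Lemma~\ref{p:unifsmalldefectaikd} at face value, as if it returned a sequence whose defect is bounded by $3\ent(\Lambda_{1/k})+\frac1k\phi(k\cdot s)$ with no dependence on the input sequence. But an element $\bar\Xcal\in\qlin_\phi(\prob\<\Gbf\>,\aikd)$ only has defect bounded by $C\cdot\phi$ for a sequence-dependent constant $C$, and what the lemma's proof actually establishes (apply it to the admissible function $C\phi$) is the bound $3\ent(\Lambda_{1/k})+\frac{C}{k}\phi(k\cdot s)$; this is exactly how the paper's own proof of the corollary writes $\phi_k$, namely $\phi_k(t)=3\ent(\Lambda_{1/k})+\frac1k C\phi(k\cdot t)$. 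With that constant present, your choice of a fixed $k=2$ yields the bound $\big(3\ent(\Lambda_{1/2})+C\,2^{\alpha-1}\big)s^\alpha$, whose constant still depends on the sequence through $C$ --- precisely the dependence that the uniformly bounded defect property is defined to eliminate --- so the step ``the constant is uniform over all sequences'' fails. A sanity check that the literal reading cannot be the intended one: it would make the uniform property trivial already with $k=1$, since $\ent(\Lambda_{1/1})=0$ and $\phi_1=\phi$, yet the paper treats it as requiring proof.

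The repair is small, but it is the actual content of the paper's proof: let $k$ depend on the sequence. Since $Ck^{\alpha-1}\to0$ as $k\to\infty$ and $\ent(\Lambda_{1/k})\le\ln 2$ for all $k$, choosing for each sequence some $k\ge C^{1/(1-\alpha)}$ gives $\phi_k(s)\le(3\ln 2+1)\,s^\alpha$ for all $s\ge1$, a bound whose constant is independent of the sequence. The paper packages this as the existence of constants $c_k\to0$ (depending on $C$, hence on the sequence) with $\phi_k(t)\le c_k t^\alpha$, from which both properties follow at once. Your vanishing-defect argument survives this correction unchanged, because there $k$ is allowed to grow with the sequence and with $\epsilon$ anyway: one gets $\int_1^\infty\phi_k(t)\,t^{-2}\,\d t=3\ent(\Lambda_{1/k})+\frac{Ck^{\alpha-1}}{1-\alpha}\to0$ as $k\to\infty$ for each fixed sequence, by the same two observations you made (finiteness of the integral for $\alpha<1$ and $\ent(\Lambda_{1/k})\to0$).
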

\begin{proof}
  Let $\bar\Xcal\in\qlin_{\phi}(\prob\<\Gbf\>,\aikd)$.  By Lemma
  \ref{p:unifsmalldefectaikd} there exists an asymptotically
  equivalent sequence $\bar\Ycal$ with defect bounded by $\phi_k$
  defined by
  \begin{align*}
  \phi_k(t) &:= 3\ent(\Lambda_{1/k}) + \frac{1}{k} C \phi(k \cdot t) \\
  &=  3\ent(\Lambda_{1/k}) + \frac{1}{k} C (k \cdot t)^\alpha
  \end{align*}
  Hence there exists a sequence $c_k \to 0$ such that for all $t \geq 1$,
  \[
  \phi_k(t) \leq c_k t^\alpha
  \]
  showing the uniformly bounded and vanishing defect property.
\end{proof}

\subsection{Diagrams of tropical probability spaces}
By applying the general setup in the previous section to the metric
Abelian monoids $(\prob\<\Gbf\>, \otimes, \ikd)$ and $(\prob\<\Gbf\>,
\otimes, \aikd)$ and using the Corollary~\ref{p:vanishingdefect} we
obtain the following theorem.

\begin{theorem}{p:corollary-acone-ikd-aikd}
  Fix an admissible function $\phi$ and consider the commutative
  diagram
  \[\tageq{tropical-diagram}
  \begin{cd}[row sep=tiny]
    \mbox{}
    \&
    \big(\lin(\prob\<\Gbf\>,\ikd) , \aikd \big)
    \arrow[hookrightarrow]{dd}{\i_{1}}
    \arrow[hookrightarrow]{r}{\j_{1}}
    \&
    \big(\qlin_{\phi}(\prob\<\Gbf\>,\ikd) , \aikd\big)
    \arrow[hookrightarrow]{dd}{\i_{2}}
    \\
    (\prob\<\Gbf\>,\aikd)
    \arrow{ru}{f}
    \arrow{rd}{f'}
    \\
    \mbox{}
    \&
    \big(\lin(\prob\<\Gbf\>,\aikd), \hat\aikd \big)
    \arrow[hookrightarrow]{r}{\j_{2}}
    \&
    \big(\qlin_{\phi}(\prob\<\Gbf\>,\aikd), \hat\aikd \big)
  \end{cd}
  \]
  Then the following statements hold:
  \begin{enumerate}
  \item 
    The maps $f, f', \i_1$ are isometries.
  \item 
    The maps $\i_2, \j_1,\j_2$ are isometric embeddings and each map
    has a dense image in the corresponding target space.
  \item 
    The space in the lower-right corner,
    $\big(\qlin_{\phi}(\prob\<\Gbf\>,\aikd), \hat\aikd \big)$, is
    complete.
  \end{enumerate}
\end{theorem}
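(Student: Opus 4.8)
The plan is to read \eqref{eq:tropical-diagram} as the concrete incarnation of the abstract diagram \eqref{eq:tropical-diagram-0}, obtained by taking $\Gamma = \prob\<\Gbf\>$ with $\dist = \ikd$, and then to deduce each of the three assertions from the general results of Section~\ref{se:asymptotic-cone-abstract}. Almost all of the content is already packaged in those results; the one nontrivial input, that the monoid at hand satisfies the two defect properties, is exactly Corollary~\ref{p:vanishingdefect}. So after making the identification explicit, the proof is largely bookkeeping, the main thing to be careful about being the two overloaded roles of $\aikd$ (as a metric on $\prob\<\Gbf\>$, and as the asymptotic distance $\dista$ on $\ikd$-quasi-linear sequences).

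First I would check that the induced homogeneous metric $\disth$ of the abstract construction is here nothing but $\aikd$: since $n\cdot\Xcal = \Xcal^{n}$ in this monoid, $\disth(\Xcal,\Ycal) = \lim_{n}\frac1n\ikd(\Xcal^{n},\Ycal^{n}) = \aikd(\Xcal,\Ycal)$, and correspondingly $\dista$ becomes the $\aikd$ of the diagram while $\distha$ becomes $\hat\aikd$. Granting this, claim (1) is immediate from the discussion surrounding \eqref{eq:tropical-diagram-0}: the source carries the metric $\aikd = \disth$, so $f$ and $f'$ are isometries (both image distances equal $\disth$), and $\i_1$ is an isometry because each $\lin$-space is canonically identified, via $\bar\Xcal\mapsto\Xcal(1)$, with $\prob\<\Gbf\>$ carrying the metric $\aikd$. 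In the same way $\j_1$ and $\j_2$ are isometric embeddings, and $\i_2$ is an isometric embedding with dense image directly by Lemma~\ref{p:dist-dista-isometry-dense}, whose hypothesis (that $\phi$ be a positive admissible function) is satisfied.

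It remains to upgrade $\j_1,\j_2$ to dense embeddings and to establish claim (3). For this I would invoke Corollary~\ref{p:vanishingdefect}, which furnishes both defect properties for $(\prob\<\Gbf\>,\aikd)$. The vanishing defect property lets me apply Proposition~\ref{p:eps-linear-dense}, giving that $\lin$ is dense in $\qlin_\phi$, i.e.\ that $\j_2$ has dense image. Denseness of $\j_1$ I would then obtain by chasing the commuting square rather than re-running a vanishing-defect argument for the non-homogeneous metric $\ikd$: the map $\i_1$ is a surjective isometry onto $\lin(\prob\<\Gbf\>,\aikd)$ (every $\aikd$-linear sequence is $\{\Xcal^{n}\}$, which is already $\ikd$-linear), $\i_2$ is an isometric embedding, and the square commutes, so $\i_2\j_1(\lin) = \j_2\i_1(\lin) = \j_2(\lin)$ is dense in the lower-right corner, whence $\j_1(\lin)$ is dense in the upper-right corner. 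Finally, claim (3) is Proposition~\ref{p:boundeddefect}: its hypotheses are the homogeneity of $\aikd$ (clear from $\aikd(\Xcal^{n},\Ycal^{n}) = n\cdot\aikd(\Xcal,\Ycal)$) together with the uniformly bounded defect property just supplied.

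I expect no serious obstacle inside this theorem: the genuine work sits upstream in Corollary~\ref{p:vanishingdefect}, which in turn rests on the mixture estimates of Lemma~\ref{p:mixtures-dist1} (the mixture $\Xcal\oplus_{\Lambda_{1/k}}\set{\bullet}$ playing the role of the otherwise unavailable radical $\Xcal^{1/k}$) and on the quasi-homogeneity of Corollary~\ref{p:psi-homo}. The only points that demand a moment's attention are verifying $\disth = \aikd$, so that the source metric is the correct one for $f,f'$ to be isometries, and deriving the denseness of $\j_1$ from the square instead of directly.
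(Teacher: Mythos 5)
Your proposal is correct and is essentially the paper's own proof: the paper simply states that the theorem follows by applying the general setup of Section~\ref{se:asymptotic-cone-abstract} (diagram~(\ref{eq:tropical-diagram-0}), Lemma~\ref{p:dist-dista-isometry-dense}, Propositions~\ref{p:eps-linear-dense} and~\ref{p:boundeddefect}) to the monoids $(\prob\<\Gbf\>,\otimes,\ikd)$ and $(\prob\<\Gbf\>,\otimes,\aikd)$ together with Corollary~\ref{p:vanishingdefect}, which is exactly your plan. The details you supply that the paper leaves implicit --- the identification $\disth=\aikd$ via $n\cdot\Xcal=\Xcal^{\otimes n}$, and deducing density of $\j_1$ by chasing the commuting square through the isometric embedding $\i_2$ --- are the natural and correct way to fill in that one-line argument.
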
 
We would like to conjecture that all maps in the diagram above are
isometries.

Since  $\qlin_{\phi}(\prob\<\Gbf\>,\aikd)$ is complete and
has $\lin(\prob\<\Gbf\>,\aikd)$ as a dense subset for any $\phi>0$,
it follows that
$\qlin_{\phi}(\prob\<\Gbf\>,\aikd)$ does not depend (up to isometry of
pseudo-metric spaces) on the choice of
admissible $\phi>0$. From now on  we will choose the particular function
$\phi(t):=t^{3/4}$. The choice
will be clear when we formulate the Asymptotic Equipartition Property
for diagrams.
We may finally define the space of \term{tropical
  $\Gbf$-diagrams}, as the space in the lower-right corner of the
diagram
\[
  \prob[\Gbf]
  :=
  \big(\qlin_{\phi}(\prob\<\Gbf\>, \aikd),\otimes,\cdot,\hat\aikd\big)
\]
By Theorem~\ref{p:corollary-acone-ikd-aikd} above, this space is
complete.
  
The entropy function $\ent_{*}:\prob\<\Gbf\>\to\Rbb^{\Gbf}$
extends to a linear functional
\[
  \ent_{*}:\prob[\Gbf]\to(\Rbb^{\Gbf},\|\cdot\|_{1})
\]
of norm one, defined by
\[
  \ent_{*}(\bar\Xcal) = \lim_{n\to \infty } \frac1n \ent_{*} \big(\Xcal(n)\big)
\]

\section{AEP}
\label{se:aep}
\subsection{Homogeneous diagrams}
A $\Gbf$-diagram $\Xcal$ is called \term{homogeneous} if the
automorphism group $\Aut(\Xcal)$ acts transitively on every space in
$\Xcal$. Homogeneous probability spaces are uniform. For more complex
indexing categories this simple description is not sufficient.
The subcategory of all homogeneous $\Gbf$-diagrams will
be denoted $\Prob\<\Gbf\>_{\hsf}$. This space is invariant under the
tensor product, thus it is a metric Abelian monoid. 

\subsubsection{Universal construction of homogeneous diagrams} 
Examples of homogeneous dia\-grams could be constructed in the
following manner. Fix a finite group $G$ and consider a $\Gbf$-diagram
$\set{H_{i};\alpha_{ij}}_{i\in\Gbf}$ of subgroups of $G$, where
morphisms $\alpha_{ij}$ are inclusions. The $\Gbf$-diagram of
probability spaces $\set{X_{i};f_{ij}}$ is constructed by setting
$X_{i}=(G/H_{i},\unif)$ and taking $f_{ij}$ to be the natural
projection $G/H_{i}\to G/H_{j}$, whenever $H_{i}\subset H_{j}$.  The
resulting diagram $\Xcal$ will be minimal if and only if for any
$i,j\in\Gbf$ there is $k\in\Gbf$, such that $H_{k}=H_{i}\cap H_{j}$.
In fact, any homogeneous diagram arises this way, see
\cite{Matveev-Asymptotic-2018}.

\subsection{Asymptotic Equipartition Property}
In~\cite{Matveev-Asymptotic-2018} the following theorem is proven.
\begin{theorem}{p:aep-complete}
  Suppose $\Xcal\in\prob\<\Gbf\>$ is a $\Gbf$-diagram of
  probability spaces for some fixed indexing category $\Gbf$.
  Then there exists a sequence
  $\bar\Hcal=(\Hcal_{n})_{n=0}^{\infty}$ of homogeneous
  $\Gbf$-diagrams such
  that
  \[\tageq{quantaep} 
    \frac{1}{n} 
    \ikd (\Xcal^{\otimes n},\Hcal_{n}) 
    \leq 
    C(|X_0|,\size{\Gbf}) \cdot \sqrt{\frac{\ln^3 n}{n}} 
  \]
where $C(|X_0|, \size{\Gbf})$ is a constant only depending on $|X_0|$
and $\size{\Gbf}$.
\end{theorem}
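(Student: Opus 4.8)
The plan is to exploit the permutation symmetry of $\Xcal^{\otimes n}$ through the method of types. The symmetric group $S_{n}$ acts on $\Xcal^{\otimes n}$ by permuting the tensor factors; this action preserves the product measure $p_{X_{0}}^{\otimes n}$ and commutes with every morphism $\chi_{ij}^{\otimes n}$ of the diagram, since these act coordinatewise. Hence the $S_{n}$-orbits in the top space $X_{0}^{\otimes n}$ are exactly the type classes $T_{Q}$, indexed by empirical distributions $Q$ on $X_{0}$, and on each orbit the measure is uniform. Writing $\Hcal_{n}^{Q}$ for the sub-diagram obtained by conditioning $\Xcal^{\otimes n}$ on the event that the top sequence has type $Q$, I claim $\Hcal_{n}^{Q}$ is homogeneous: $S_{n}$ lies in $\Aut(\Hcal_{n}^{Q})$ and acts transitively on the top $(T_{Q},\unif)$, and since each reduction $\chi_{0i}^{\otimes n}$ is $S_{n}$-equivariant and surjective onto the corresponding space of $\Hcal_{n}^{Q}$, the action is transitive there too. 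I will take $\Hcal_{n}:=\Hcal_{n}^{Q_{0}}$, where $Q_{0}$ is the lattice type closest to $p_{X_{0}}$.

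Next I would reduce to estimating distances between these homogeneous pieces. The type map $X_{0}^{\otimes n}\to\Tcal$ onto the finite space $\Tcal$ of types is a reduction of the initial space whose fibres are the $\Hcal_{n}^{Q}$. Applying Proposition~\ref{p:slicingcorollary}(\ref{p:slicingreduction}) with $U=\Tcal$ gives
\[
  \ikd(\Xcal^{\otimes n},\Hcal_{n})
  \leq
  \int_{\Tcal}\ikd(\Hcal_{n}^{Q},\Hcal_{n}^{Q_{0}})\,\d p(Q)
  +
  \size{\Gbf}\cdot\ent(\Tcal).
\]
Since the number of types is at most $(n+1)^{|X_{0}|}$, we have $\ent(\Tcal)\leq|X_{0}|\log(n+1)$, so the last term contributes only $O(\log n/n)$ after dividing by $n$, well within the target bound.

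It remains to bound the integrand for $Q$ near $Q_{0}$, and this is the crux. Here I would build an explicit $S_{n}$-equivariant fan $\Hcal_{n}^{Q}\leftarrow\Zcal\to\Hcal_{n}^{Q_{0}}$ by coupling a type-$Q$ sequence to a type-$Q_{0}$ sequence that agrees with it outside a minimal set of $\tfrac12 n\|Q-Q_{0}\|_{1}$ edited coordinates, carrying the uniform measure. Because the morphisms act coordinatewise, editing $k$ top-coordinates alters at most $k$ coordinates in every pushforward space simultaneously; hence for each object $i$ the two legs differ on at most $\tfrac12 n\|Q-Q_{0}\|_{1}$ positions, and the entropy distance of the fan is bounded by
\[
  \kd(\Zcal)
  \leq
  \size{\Gbf}\cdot
  \Big(\tfrac12 n\|Q-Q_{0}\|_{1}\cdot\log|X_{0}|
  +
  \log\textstyle\binom{n}{\lceil n\|Q-Q_{0}\|_{1}\rceil}\Big).
\]
Combining this with concentration of the empirical type — off a set of negligible probability one has $\|Q-Q_{0}\|_{1}=O(\sqrt{\log n/n})$ by a Sanov/Chernoff estimate and a union bound over the polynomially many types — controls $\frac1n\int\ikd(\Hcal_{n}^{Q},\Hcal_{n}^{Q_{0}})\,\d p(Q)$ and yields the stated bound $C(|X_{0}|,\size{\Gbf})\sqrt{\ln^{3}n/n}$; the polylogarithmic factor comes from the position-counting term $\log\binom{n}{\cdot}$ together with the typicality threshold, summed over the $\size{\Gbf}$ spaces.

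The main obstacle is precisely this last fan construction: one must produce a single coupling that is simultaneously $S_{n}$-equivariant (to preserve homogeneity and to descend to a genuine fan of $\Gbf$-diagrams) and economical on every one of the $\size{\Gbf}$ spaces at once, rather than space by space. The coordinatewise nature of the morphisms is what makes a uniform edit-distance coupling on the top propagate to all spaces; verifying that the induced maps are well-defined reductions and that the measures push forward to the uniform measures of $\Hcal_{n}^{Q}$ and $\Hcal_{n}^{Q_{0}}$ is where the real care is needed, and it is also the step that fixes the exact power of the logarithm.
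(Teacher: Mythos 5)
First, a structural point: this paper never actually proves Theorem~\ref{p:aep-complete}. It is imported from the companion article (``In \cite{Matveev-Asymptotic-2018} the following theorem is proven''), and the only thing established here is its tropical reformulation, Theorem~\ref{p:aep-tropical}. So there is no in-paper proof to compare your attempt against; I can only judge your proposal on its own merits.

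Judged that way, your outline is sound and delivers the stated rate. The three pillars are all correct. (i) Slicing $\Xcal^{\otimes n}$ over the type of the initial sequence does yield homogeneous diagrams: $S_n$ acts by automorphisms on each slice, transitively on $T_Q$, and transitivity is inherited by every other space because each is the image of $T_Q$ under an $S_n$-equivariant coordinatewise reduction. (You implicitly use that $\Gbf$ has an initial object, so that every space is such an image; this follows by iterating the least-common-ancestor axiom.) (ii) The application of Proposition~\ref{p:slicingcorollary}(\ref{p:slicingreduction}) with $U=\Tcal$ and $\ent(\Tcal)\leq |X_0|\ln(n+1)$ is exactly right. (iii) The coupling you flag as the main obstacle does exist and can be made precise as follows: put the uniform measure on the $S_n$-invariant set $W=\set{(x,y)\in T_Q\times T_{Q_0}\st d_H(x,y)=k}$ with $k=\tfrac12 n\|Q-Q_0\|_1$; its two marginals are $S_n$-invariant measures on transitive $S_n$-sets, hence uniform, so the coordinate projections are genuine reductions onto $\Hcal_n^Q$ and $\Hcal_n^{Q_0}$, and since coordinatewise maps do not increase Hamming distance, each fiber of the $i$-th space of the coupling over the $i$-th space of $\Hcal_n^Q$ has at most $(k+1)\binom{n}{k}|X_0|^{k}$ points. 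This gives your $\kd$ estimate (up to a factor $2$, as both legs contribute), and with the Chernoff/union bound giving $k=O(\sqrt{n\ln n})$ outside a set of types of small probability, the accounting indeed produces $C(|X_0|,\size{\Gbf})\sqrt{\ln^3 n/n}$. Two things you left unsaid should be made explicit. First, the atypical types must be absorbed via the crude bound $\ikd(\Hcal_n^Q,\Hcal_n^{Q_0})\leq\|\ent_*\Hcal_n^Q\|_1+\|\ent_*\Hcal_n^{Q_0}\|_1\leq 2\size{\Gbf}\, n\ln|X_0|$ (from the independence fan), multiplied by the $O(1/n)$ probability of the atypical set. Second, a correction of emphasis: $S_n$-equivariance of the coupling is not what makes it ``descend to a genuine fan of $\Gbf$-diagrams'' --- any coupling of the two top spaces generates such a fan, because all structure maps are coordinatewise --- rather, it is what forces the marginals to be the uniform measures, i.e.\ the actual measures of $\Hcal_n^Q$ and $\Hcal_n^{Q_0}$.
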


Defining
\[
\prob[\Gbf]_{\hsf}:=\qlin_{\phi}(\prob\<\Gbf\>_{\hsf},\aikd)
\]
the Asymptotic Equipartition Property can be reformulated as in the
Theorem~\ref{p:aep-tropical} below. 
\begin{theorem}{p:aep-tropical}
  For any indexing category $\Gbf$ the image of the natural inclusion
  \[
    \prob[\Gbf]_{\hsf}\hookrightarrow\prob[\Gbf]
  \]
  is dense.
\end{theorem}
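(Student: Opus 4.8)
The plan is to deduce density from the Asymptotic Equipartition Property (Theorem~\ref{p:aep-complete}) after first reducing to linear sequences. By Corollary~\ref{p:vanishingdefect}, the space $\qlin_\phi(\prob\<\Gbf\>,\aikd)$ has the vanishing defect property for $\phi(t)=t^{3/4}$, so Proposition~\ref{p:eps-linear-dense} gives that the linear sequences $\lin(\prob\<\Gbf\>,\aikd)$ are $\hat\aikd$-dense in $\prob[\Gbf]$. It therefore suffices to approximate an arbitrary linear sequence $\vec\Xcal=\{\Xcal^{\otimes n}\}_n$, for $\Xcal\in\prob\<\Gbf\>$, by an element of $\prob[\Gbf]_\hsf$; in fact I will produce a homogeneous tropical diagram that is asymptotically equivalent to $\vec\Xcal$.

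Given $\Xcal$, I would apply the AEP to obtain homogeneous diagrams $\Hcal_n$ with $\frac1n\ikd(\Xcal^{\otimes n},\Hcal_n)\leq C\sqrt{\ln^3 n / n}$, and set $\bar\Hcal:=(\Hcal_n)_n$, taking $\Hcal_0$ to be the one-point diagram so that $\Hcal_0=\mathbf 0$. Since $\aikd(\Xcal,\Ycal)=\lim_n\frac1n\ikd(\Xcal^{\otimes n},\Ycal^{\otimes n})\leq\ikd(\Xcal,\Ycal)$ by subadditivity, the AEP bound gives $\aikd(\Xcal^{\otimes n},\Hcal_n)\leq C\sqrt{n\ln^3 n}$, whence $\hat\aikd(\vec\Xcal,\bar\Hcal)=\lim_n\frac1n\aikd(\Xcal^{\otimes n},\Hcal_n)=0$; this settles the asymptotic equivalence.

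The main work, and the step I expect to be the obstacle, is to check that $\bar\Hcal$ actually lies in $\prob[\Gbf]_\hsf$, i.e. that it is quasi-linear with defect bounded by a constant multiple of $\phi$. Writing $t=m+n$ and using the exact linearity $\Xcal^{\otimes(m+n)}=\Xcal^{\otimes m}\otimes\Xcal^{\otimes n}$ together with the triangle inequality,
\[
\aikd(\Hcal_{m+n},\Hcal_m\otimes\Hcal_n)
\leq
\aikd(\Hcal_{m+n},\Xcal^{\otimes(m+n)})
+
\aikd(\Xcal^{\otimes m}\otimes\Xcal^{\otimes n},\Hcal_m\otimes\Hcal_n).
\]
I would bound the first summand directly by the AEP, and the second by the $1$-Lipschitz property of $\otimes$ (Proposition~\ref{p:mam-translations}(2), applied to the metric Abelian monoid $(\prob\<\Gbf\>,\otimes,\aikd)$) followed again by the AEP, so that each summand is $O(\sqrt{t\ln^3 t})$. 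The decisive point—and the reason for the earlier choice $\phi(t)=t^{3/4}$—is the elementary estimate $\sqrt{t\ln^3 t}=t^{1/2}(\ln t)^{3/2}\leq C_\phi\,t^{3/4}$, valid for all $t\geq1$ since $(\ln t)^{3/2}=o(t^{1/4})$. Hence the defect of $\bar\Hcal$ is bounded by $C''\phi(m+n)$, so $\bar\Hcal\in\qlin_\phi(\prob\<\Gbf\>_\hsf,\aikd)=\prob[\Gbf]_\hsf$, completing the argument.

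In summary, the genuinely substantive ingredient is the AEP itself; the remaining effort is the bookkeeping needed to see that the square-root-times-logarithm rate is absorbed by the sub-linear gauge $\phi$, which is precisely why $\phi$ must be chosen to grow faster than $t^{1/2}$.
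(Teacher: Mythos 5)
Your proposal is correct and follows essentially the same route as the paper's own (very terse) proof: density of linear sequences in $\prob[\Gbf]$, then approximation of each linear sequence $\vec\Xcal$ by the AEP sequence $\bar\Hcal$, whose defect is absorbed by $\phi(t)=t^{3/4}$. The only difference is that you spell out the details the paper leaves implicit — the asymptotic equivalence via $\aikd\leq\ikd$, and the quasi-linearity of $\bar\Hcal$ via the triangle inequality and the $1$-Lipschitz property of $\otimes$ — which is exactly the intended bookkeeping.
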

\proof By Theorem \ref{p:aep-complete}, every linear sequence can be
approximated by a homogeneous sequence. It follows from the bound
(\ref{eq:quantaep}) that the defect of the approximating homogeneous
sequence is bounded by a constant times $\phi$, defined by
  $\phi(t)=t^{3/4}$. Moreover, the linear sequences are dense by
Theorem \ref{p:corollary-acone-ikd-aikd}. This finishes the proof.
\eproof

\section{The tropical cone for probability spaces and chains}
\label{se:chains}
Although for general indexing categories $\Gbf$ the space of tropical
$\Gbf$-diagrams is infinite dimensional, it has a very simple,
finite-dimensional description if $\Gbf$ consists of a single object,
or if it is a special type of indexing categories called a \term{chain}.

The chain of length $k$, denoted by $\Cbf_k$, is the indexing category
with $k$ objects $O_1, \dots, O_k$, and a morphism from $O_i$ to $O_j$
whenever $i \geq j$. A $\Cbf_k$-diagram of probability spaces is then
a chain of reductions
\[
  X_k \to X_{k-1} \to \cdots \to X_{1}
\]
Recall that homogeneous probability spaces are (isomorphic to)
probability spaces with a uniform distributions.  \emph{Homogeneous}
chains have a very simple description as well. A chain $\Hcal \in
\prob\<\Cbf_k\>$ is homogeneous if and only if the individual
probability spaces are homogeneous, i.e.~if and only if the individual
probability spaces are (isomorphic to) probability spaces with a
uniform measure.

Based on this simple description we derive the following theorem.

\begin{theorem}{p:spaces-chains}
  For $k \in \Nbb$, the tropical cone $\prob[\Cbf_k]$ is isomorphic to
  the following cone in $(\Rbb^k, |\cdot|_1)$:
  \[
    \set{
      \left(
      \begin{array}{c}
	x_k \\
	\vdots \\
	x_1
      \end{array}
      \right)	
      \in 
      \Rbb^k \
      \middle|\
      0 \leq x_1 \leq \dots \leq x_k
    }
  \]
  In particular, the algebraic structure and the pseudo-distance are
  preserved under the isomorphism.
\end{theorem}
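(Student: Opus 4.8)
The natural candidate for the isomorphism is the entropy functional $\ent_{*}\colon\prob[\Cbf_k]\to(\Rbb^k,\|\cdot\|_1)$, and the plan is to show that it is an $\Rbb_{\ge0}$-linear bijection onto the stated cone $C:=\set{0\le x_1\le\dots\le x_k}$ which preserves the pseudo-distance. That $\ent_{*}$ is $\Rbb_{\ge0}$-linear and of norm one is already recorded in the excerpt, so in particular it is $1$-Lipschitz, which gives the easy lower bound $\|\ent_{*}\bar\Xcal-\ent_{*}\bar\Ycal\|_1\le\hat\aikd(\bar\Xcal,\bar\Ycal)$ for all tropical chains. For the image, I note that entropy cannot increase along a reduction, so for an honest chain $X_k\to\cdots\to X_1$ one has $\ent(X_k)\ge\cdots\ge\ent(X_1)\ge0$; passing to the asymptotic limit, $\ent_{*}$ lands inside $C$. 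Thus the real content is (i) surjectivity onto $C$ and (ii) the reverse distance inequality $\hat\aikd\le\|\cdot\|_1$.

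For surjectivity I would use the \emph{basic chains}. For $1\le j\le k$ and a finite probability space $U$, let $\Ecal_j(U)$ be the homogeneous chain equal to $U$ in positions $i\ge j$ and to a one–point space in positions $i<j$; its entropy vector is $\ent(U)\cdot v_j$, where $v_j=(\,\underbrace{1,\dots,1}_{k-j+1},0,\dots,0\,)$. Because the tensor product of chains is computed level-wise, every homogeneous chain factors as $\bigotimes_{j}\Ecal_j(B_j)$ with $B_j$ the fibre of $X_j\to X_{j-1}$, so each tropical chain is a tropical combination $\sum_j c_j\,\overline{\Ecal_j}$ with $c_j=x_j-x_{j-1}\ge0$. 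In particular the tropical scalings and sums of the $\overline{\Ecal_j}$ realise every nonnegative combination $\sum_j c_j\,\ent(U)\,v_j$, and since $v_1,\dots,v_k$ generate $C$ as a convex cone this already shows $\ent_{*}$ is onto $C$; combined with $\ent_{*}(\prob[\Cbf_k])\subseteq C$ the image is exactly $C$.

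The heart of the matter is the upper bound $\hat\aikd(\bar\Xcal,\bar\Ycal)\le\sum_i|x_i-y_i|$. By the Asymptotic Equipartition Property (Theorem~\ref{p:aep-tropical}) homogeneous chains are dense, so by continuity it suffices to treat two homogeneous chains $\Xcal,\Ycal$ of uniform spaces. For these I would construct, after passing to a common tensor power to clear divisibility, a connecting two-fan $\Fcal=(\Xcal\ot\Zcal\to\Ycal)$ of chains whose apex satisfies $\ent(Z_i)=\max(\ent X_i,\ent Y_i)$ for every $i$. Since each $Z_i$ surjects onto both $X_i$ and $Y_i$ one has $\ent Z_i\ge\max(x_i,y_i)$ automatically, so such a $\Zcal$ is optimal and gives
\[
  \kd(\Fcal)=\sum_i\big(2\max(x_i,y_i)-x_i-y_i\big)=\sum_i|x_i-y_i|,
\]
whence, normalising the tensor power, $\aikd(\Xcal,\Ycal)=\lim_n\tfrac1n\ikd(\Xcal^{\otimes n},\Ycal^{\otimes n})\le\sum_i|x_i-y_i|$.

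Constructing this optimal $\Zcal$ is the step I expect to be the main obstacle, precisely because the coordinate achieving the maximum may switch with $i$, so a greedy level-by-level independent coupling overshoots. The plan is to realise $\Zcal$ together with its two legs as a single uniform space $Z_k$ carrying two nested flags of partitions: the legs $\Zcal\to\Xcal$ and $\Zcal\to\Ycal$ present the $X_i$– and $Y_i$–data as two increasing families of partitions with prescribed entropies $x_i$ and $y_i$, and the chain condition forces each family to be a flag. The remaining freedom—the coupling of $X_k$ with $Y_k$ at the top—must be chosen so that at \emph{every} level the two partitions are comparable (one refines the other); then their join has entropy exactly $\max(x_i,y_i)$ and the apex entropies come out as required. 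I would build such a coupling level by level, at each stage coarsening or refining the $Y$–partition to sit inside or around the corresponding $X$–partition according to the sign of $x_i-y_i$, using the multiplicity from the tensor power to keep every map balanced; the Slicing Lemma (Proposition~\ref{p:slicingcorollary}) together with the mixture estimates (Lemma~\ref{p:mixtures-dist1}) offers an alternative, induction on the chain length by conditioning on the foot $X_1$ and absorbing the entropy penalty into a mixture with the one–point diagram. Granting the upper bound, $\ent_{*}$ is a distance-preserving $\Rbb_{\ge0}$-linear bijection onto $C$, which is exactly the asserted isomorphism of pseudo-metric semimodules.
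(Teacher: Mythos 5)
Your overall architecture coincides with the paper's: the isomorphism is $\ent_{*}$, the lower bound $\|\ent_{*}\bar\Xcal-\ent_{*}\bar\Ycal\|_1\le\hat\aikd(\bar\Xcal,\bar\Ycal)$ comes from the $1$-Lipschitz property of entropy, your surjectivity argument via the basic chains $\Ecal_j(U)$ is correct, and the reduction of the hard inequality to chains of uniform spaces via the tropical AEP (Theorem~\ref{p:aep-tropical}) is exactly how the paper proceeds, where that inequality is supplied by Lemma~\ref{p:aikd-uniform} and its chain analogue.

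However, your execution of the hard step contains a genuine flaw. First, ``passing to a common tensor power to clear divisibility'' is impossible: $U_n^{\otimes\ell}\cong U_{n^\ell}$, and $n^\ell\mid m^\ell$ if and only if $n\mid m$ (compare $p$-adic valuations), so tensor powers never create reductions between uniform spaces that did not already exist. Consequently the exact coupling you aim for, with $\ent(Z_i)=\max(x_i,y_i)$, typically does not exist even for $k=1$: a fan $U_2\ot Z\to U_3$ with $\ent(Z)=\ln 3$ would force $Z\cong U_3$ (an entropy-preserving reduction of finite spaces is an isomorphism), and there is no reduction $U_3\to U_2$. The repair is to abandon exactness and build a coupling that is optimal only up to an additive constant, which the normalization $\aikd(\Xcal,\Ycal)=\lim_N\frac1N\ikd(\Xcal^{\otimes N},\Ycal^{\otimes N})$ then annihilates; this is precisely what the paper's Lemma~\ref{p:aikd-uniform} does. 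It couples $U_n$ and $U_m$ through $U_{nm}$ via the two digit maps $c\mapsto\lfloor c/m\rfloor$ and $c\mapsto\lfloor c/n\rfloor$; the minimal reduction $Z$ of this fan has $|Z|\le n+m$, hence $\kd\le 2\ln 2+|\ln(n/m)|$ and $\aikd(U_n,U_m)=|\ln n-\ln m|$. For chains one checks that this digit coupling commutes with the scaling chain maps $a\mapsto\lfloor a\,n_i/n_{i+1}\rfloor$, so it assembles into a fan of chains with $\kd\le 2k\ln 2+\sum_i|x_i-y_i|$; that is the ``similar argument'' the paper alludes to. Note also that once the additive error is accepted, the difficulty you single out as the main obstacle --- that the level attaining the maximum switches with $i$, so no nested system of partitions exists --- simply evaporates: the digit coupling requires no nesting, since the support bound $|Z_i|\le n_i+m_i$ already forces $\ent(Z_i)\le\max(x_i,y_i)+\ln 2$ at every level. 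Your fallback route (Slicing Lemma plus mixture estimates, inducting on chain length) is too vague to assess and, as written, does not supply the missing estimate.
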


In case of single probability spaces, Theorem \ref{p:spaces-chains} is
a direct consequence of the asymptotic equipartition property and the
following lemma. For chains, a similar argument works.

\begin{lemma}{p:aikd-uniform}
  Denote by $U_n$ a finite uniform probability space of
  cardinality $n$, then \[ \tageq{ikd-uniform} \ikd(U_n,
  U_m) \leq 2 \ln 2 + \left| \ln \frac{n}{m} \right| \]
  and \[ \tageq{aikd-uniform} \aikd(U_n, U_m) = \left| \ent(U_n)
  - \ent(U_m) \right| \]
\end{lemma}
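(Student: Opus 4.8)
The plan is to prove the two displayed inequalities separately, using that for a single space $\ent_*$ is the scalar $\ent(U_n)=\ln n$, so that for any two-fan $U_n\ot Z\to U_m$ the quantity simplifies to $\kd=|\ent(Z)-\ln n|+|\ent(Z)-\ln m|$.

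For the bound (\ref{eq:ikd-uniform}) I would assume without loss of generality that $n\geq m$ and exhibit a single good competitor $Z$ in the infimum defining $\ikd$. The natural choice is the \emph{monotone (quantile) coupling} of $U_n$ and $U_m$: partition $[0,1]$ into $n$ and into $m$ equal subintervals, let $Z$ carry the common refinement with its length measure, and let the two reductions record in which $U_n$- respectively $U_m$-interval a given piece lies. Both projections are measure-preserving, so this is a legitimate fan. The key combinatorial observation is that the two equipartitions share the endpoints $0$ and $1$, so their common refinement has at most $n+m-1$ pieces; hence $\ent(Z)\leq\ln(n+m-1)<\ln 2+\ln n$. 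On the other hand each reduction can only decrease entropy, so $\ent(Z)\geq\ln n\geq\ln m$, which removes the absolute values and yields $\kd=2\ent(Z)-\ln n-\ln m<2\ln 2+\ln(n/m)=2\ln 2+|\ln(n/m)|$, as desired.

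For the asymptotic identity (\ref{eq:aikd-uniform}) I would first record that the tensor product of uniform spaces is uniform, so $U_n^{\otimes N}\cong U_{n^N}$ and $\aikd(U_n,U_m)=\lim_{N\to\infty}\frac1N\ikd(U_{n^N},U_{m^N})$. Applying the bound just proved with $n^N,m^N$ in place of $n,m$ gives $\frac1N\ikd(U_{n^N},U_{m^N})\leq\frac{2\ln 2}{N}+|\ln(n/m)|$, so the upper bound $\aikd(U_n,U_m)\leq|\ln n-\ln m|$ follows in the limit. For the matching lower bound I would invoke the $1$-Lipschitz property of $\ent_*$ together with its additivity under tensor products: $N|\ln n-\ln m|=|\ent_*(U_n^{\otimes N})-\ent_*(U_m^{\otimes N})|\leq\ikd(U_n^{\otimes N},U_m^{\otimes N})$; dividing by $N$ and letting $N\to\infty$ gives $\aikd(U_n,U_m)\geq|\ln n-\ln m|=|\ent(U_n)-\ent(U_m)|$. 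Combining the two inequalities yields the claimed equality.

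The only step requiring genuine thought is the construction in the first part: everything reduces to producing a single coupling whose total entropy exceeds $\max(\ln n,\ln m)$ by at most $\ln 2$, and the monotone coupling does exactly this because stacking the two equipartitions of $[0,1]$ creates at most $n+m-1$ atoms. Once this competitor is in hand, both the $\ikd$-bound and---via tensor powers and the Lipschitz estimate---the asymptotic identity become routine.
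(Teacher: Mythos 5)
Your proposal is correct and follows essentially the same route as the paper: an explicit two-fan over a coupling with at most $n+m$ atoms gives the $\ikd$ bound, and the asymptotic identity then follows from the $1$-Lipschitz property of entropy applied to tensor powers. In fact your monotone coupling of the two equipartitions of $[0,1]$ is isomorphic to the paper's construction (which couples $U_n$ and $U_m$ through $U_{nm}$ via division with remainder and then passes to the minimal reduction), so the two arguments differ only in presentation.
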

\begin{proof}
  We will construct a specific two-fan $U_n \ot[f] U_{nm} \to[g] U_m$.
  Identify $U_\ell$ with $\{0,\dots,\ell-1\}$. Let $k \in
  U_{nm}$. Then $k$ can be written uniquely as
  \[
  \begin{cases}
    k = i_0 \cdot m + j_0 &\text{with } i_0 \in U_n, \quad j_0 \in U_m\\
    k = i_1 \cdot n + j_1 &\text{with } i_1 \in U_m, \quad j_1 \in U_n
  \end{cases}
  \]
  and we set $f(k) := i_0$ and $g(k) := i_1$.
  
  Now that we have constructed a two-fan $U_n \ot[f] U_{nm} \to[g] U_m$,
  let $U_n \ot Z \to U_m$ be its minimal reduction. We estimate
  $|Z| \leq n+ m$, which implies that
  \[
  \begin{split}
    \ikd(U_n, U_m)
    &\leq
    2 \ent(Z) - \ent(U_n) - \ent(U_m)\\
    &\leq 2 \ln(n+m) - \ln n - \ln m\\
    &\leq 2 \ln 2 + 2 \ln \max \{n, m\} - \ln n - \ln m \\
    &\leq 2 \ln 2 + \left| \ln \frac{n}{m} \right|
  \end{split}
  \]
  thus establishing inequality (\ref{eq:ikd-uniform}).

  To show equality (\ref{eq:aikd-uniform}), recall that the entropy as
  a map is $\ikd$-Lipschitz with Lipschitz constant $1$. Therefore, we
  have
  \[
    | \ent(U_n) - \ent(U_m) | 
    \leq 
    \ikd(U_n, U_m) \leq | \ent(U_n) - \ent(U_m)| + 2 \ln 2
  \]
  and
  \[
  \begin{split}
    \aikd(U_n, U_m)
    &= 
    \lim_{\ell \to \infty} \frac{1}{\ell} \ikd\big(U_n^\ell,
        U_m^\ell\big) 
    = 
    | \ent(U_n) - \ent(U_m)  |
  \end{split}
  \]
\end{proof}

\bibliographystyle{alpha}       
\bibliography{ReferencesAC}
\end{document}